\newtheorem{thm}{Theorem}[section]
\newtheorem{prop}[thm]{Proposition}
\newtheorem{defn}[thm]{Definition}
\newtheorem{problem}[thm]{Example}
\numberwithin{equation}{section}
\begin{document}


\begin{center}\large{{\bf{Some operators on interval-valued Hesitant fuzzy soft sets}}}

\vspace{0.5cm}

Manash Jyoti Borah$^{1}$ and Bipan Hazarika$^{2\ast}$

\vspace{.2cm}
$^{1}$Department of Mathematics, Bahona College,  Jorhat-785 101, Assam, India\\
Email:mjyotibora9@gmail.com

\vspace{.2cm}

$^{2}$Department of Mathematics, Rajiv Gandhi University, Rono Hills, Doimukh-791 112, Arunachal Pradesh, India\\

Email:  bh\_rgu$@$yahoo.co.in
\end{center}
\vspace{.5cm}
\title{}
\author{}
\thanks{{$^{\ast}$The corresponding author}}

\begin{abstract}
 The main aim of this paper is to introduced the operations "Union" and "Intersection," and four operators $O_1, O_2, O_3, O_4 $  on interval-valued hesitant fuzzy soft sets and discuss some of their properties. \\

 Keywords: Fuzzy soft sets; Interval-valued Hesitant fuzzy sets; Hesitant fuzzy soft sets.

AMS subject classification no: 03E72.
\end{abstract}

\maketitle
\pagestyle{myheadings}
\markboth{\rightline {\scriptsize Borah, Hazarika}}
         {\leftline{\scriptsize Some operators on interval-valued...}}

\maketitle

\section{Introduction}
Interval arithmetic was first suggested by Dwyer \cite{Dwyer51} in
1951. Development of interval arithmetic as a formal system and evidence of
its value as a computational device was provided by Moore \cite{Moore} in 1959 and Moore and Yang \cite{MooreYang62} in 1962. Further  works on interval numbers can be found in  Dwyer \cite{Dwyer53}, Fischer \cite{Fischer}. Furthermore, Moore
and Yang \cite{MooreYang58}, have developed applications to differential equations. Chiao in \cite{Chiao} introduced sequence of interval numbers and
defined usual convergence of sequences of interval number.\newline

A set consisting of a closed interval of real numbers x such that $a\leq x\leq
b$ is called an interval number. A real interval can also be considered as a
set. Thus we can investigate some properties of interval numbers, for instance
arithmetic properties or analysis properties.We denote the set of all real
valued closed intervals by I$%
\mathbf{R}
.$ Any elements of I$%
\mathbf{R}
$ is called closed interval and denoted by $\overline{x}.$ That is
$\overline{x}=\left\{  x\in%
\mathbb{R}
:\text{ }a\leq x\leq b\right\}  .$ An interval number $\overline{x}$ is a
closed subset of real numbers \cite{Chiao}. Let $x_{l}$ and $x_{r}$ be
first and last points of $\overline{x}$ interval number, respectively. For
$\overline{x}_{1},\overline{x}_{2}\in$I$%
\mathbf{R}
,$ we have $\overline{x}_{1}=\overline{x}_{2}\Leftrightarrow x_{1_{l}}%
$=$x_{2_{l}}$,$x_{1_{r}}$=$x_{2_{r}}.$ $\overline{x}_{1}+\overline{x}%
_{2}=\left\{  x\in%
\mathbb{R}
:x_{1_{l}}+x_{2_{l}}\leq x\leq x_{1_{r}}+x_{2_{r}}\right\}  ,$and if
$\alpha\geq0,$ then $\alpha\overline{x}=\left\{  x\in%
\mathbb{R}
:\text{ }\alpha x_{1_{l}}\leq x\leq\alpha x_{1_{r}}\right\}  $ and if
$\alpha<0,$ then $\alpha\overline{x}=\left\{  x\in%
\mathbb{R}
:\text{ }\alpha x_{1_{r}}\leq x\leq\alpha x_{1_{l}}\right\}  ,$
\[
\overline{x}_{1}.\overline{x}_{2}=\left\{
\begin{array}
[c]{c}%
x\in%
\mathbb{R}
:\min\left\{  x_{1_{l}}.x_{2_{l}},x_{1_{l}}.x_{2_{r}},x_{1_{r}}.x_{2_{l}%
},x_{1_{r}}.x_{2_{r}}\right\}  \leq x\\
\leq\max\left\{  x_{1_{l}}.x_{2_{l}},x_{1_{l}}.x_{2_{r}},x_{1_{r}}.x_{2_{l}%
},x_{1_{r}}.x_{2_{r}}\right\}
\end{array}
\right\}  .
\]

The Hesitant fuzzy set, as one of the extensions of Zadeh \cite{zadeh} fuzzy set, allows the membership degree that an element to a set
presented by several possible values, and  it can express the hesitant information
more comprehensively than other extensions of fuzzy set. In 2009, Torra and Narukawa \cite{torra2} introduced the
concept of hesitant fuzzy set. In 2011, Xu and Xia \cite{xu}
defined the concept of hesitant fuzzy element, which can be considered as
the basic unit of a hesitant fuzzy set, and is a simple and effective tool used to express the
decision makers’ hesitant preferences in the process of decision making. So many  researchers  has done lots of research work on aggregation, distance,
similarity and correlation measures, clustering analysis, and decision making with
hesitant fuzzy information. In 2013, Babitha and John \cite{babitha} defined another important soft set Hesitant fuzzy soft sets. They introduced basic operations such as intersection, union, compliment  and De Morgan's law was proved.In 2013, Chen et al. \cite{chen} extended hesitant fuzzy sets into interval-valued hesitant fuzzy environment and introduced the concept of interval-valued hesitant fuzzy sets. In 2015, Zhang et al. \cite{zhang} introduced some operations such as complement, "AND","OR", ring sum and ring product on interval-valued hesitant fuzzy soft sets.    \\

There are many theories like theory of probability, theory of fuzzy sets, theory of
intuitionistic fuzzy sets, theory of rough sets etc. which can be considered as mathematical
tools for dealing with uncertain data, obtained in various fields of engineering, physics,
computer science, economics, social science, medical science, and of many other diverse
fields. But all these theories have their own difficulties. The most appropriate theory for
dealing with uncertainties is the theory of fuzzy sets, introduced by L.A. Zadeh \cite{zadeh} in 1965.
This theory brought a paradigmatic change in mathematics. But there exists difficulty, how to
set the membership function in each particular case. The theory of intuitionistic fuzzy sets (see \cite{atanassov1, atanassov}) is a
more generalized concept than the theory of fuzzy sets, but this theory has the same
difficulties. All the above mentioned theories are successful to some extent in dealing with
problems arising due to vagueness present in the real world. But there are also cases where
these theories failed to give satisfactory results, possibly due to inadequacy of the
parameterization tool in them. As a necessary supplement to the existing mathematical tools
for handling uncertainty,  Molodtsov \cite{molodstov} introduced the theory of soft sets as
a new mathematical tool to deal with uncertainties while
modelling the problems in engineering, physics, computer
science, economics, social sciences, and medical sciences.
Molodtsov et al \cite{MolodtsovLeonov} successfully applied soft sets in
directions such as smoothness of functions, game theory,
operations research, Riemann integration, Perron integration, probability, and theory of measurement. Maji et al \cite{majietal1} gave the first practical application of soft sets in
decision-making problems.  Maji et al \cite{majietal2} defined
and studied several basic notions of the soft set theory. Also \c{C}a\v{g}man et al \cite{cagman} studied several basic notions of the soft set theory.  V. Torra \cite{torra,torra2} and Verma and Sharma \cite{verma} discussed the relationship between hesitant fuzzy set and showed that the envelope of hesitant fuzzy set is an intuitionistic fuzzy set. Zhang et al \cite{zhang} introduced weighted interval-valued hesitant fuzzy soft sets and finally applied it in decision making problem. Thakur et al \cite{thakur} proposed four new operators $O_1, O_2, O_3, O_4 $ on hesitant fuzzy sets.\\

In this paper, in section 3, we study operations union and intersetion on hesitant interval-valued fuzzy soft sets and some interesting properties of this noton. In section 4, we introduce four operators  $O_1, O_2, O_3, O_4$ in interval-valued hesitant fuzzy soft sets. Also various proposition are proved by using them.   

\section{Preliminary Results}

In this section we recall some basic concepts and definitions
regarding fuzzy soft sets, hesitant fuzzy  set and hesitant fuzzy soft set.

\begin{defn}
\cite{maji} Let $U$ be an initial universe and $F$ be a set of parameters. Let $\tilde{P}(U)$ denote the power set of $U$ and $A$ be a
non-empty subset of $F.$  Then $F_A$ is called a
fuzzy soft set over U where $F:A\rightarrow \tilde{P}(U)$is a
mapping from $A$ into $\tilde{P}(U).$
\end{defn}

\begin{defn}
\cite{molodstov}  $F_E$ is called a soft
set over $U$ if and only if $F$ is a mapping of $E$ into the set of all
subsets of the set $U.$
\end{defn}
In other words, the soft set is a parameterized family of subsets
of the set $U.$ Every set $F(\epsilon),$ $\epsilon\tilde{\in} E,$
from this family may be considered as the set of
$\epsilon$-element of the soft set $F_E$  or as the set of
$\epsilon$-approximate elements of the soft set.
\begin{defn}
	\cite{atanassov,xu91} Let intuitionistic fuzzy value IFV(X) denote the family of all IFVs defined on the universe X, and let $ \alpha, \beta\in IFV(X) $ be given as: \\
	$ \alpha=(\mu_\alpha,\nu_\alpha), \beta=(\mu_\beta,\nu_\beta),$
	\begin{enumerate}
\item[	(i)] $ \alpha\cap \beta=(\min(\mu_\alpha,\mu_\beta ), \max(\nu_\alpha,\nu_\beta ) $
\item[	(ii)] $ \alpha\cup \beta=(\max(\mu_\alpha,\mu_\beta ), \min(\nu_\alpha,\nu_\beta ) $	
	\item[(iii)] $ \alpha \ast \beta=(\frac{\mu_\alpha+\mu_\beta}{2(\mu_\alpha.\mu_\beta+1)},\frac{\nu_\alpha+\nu_\beta}{2(\nu_\alpha.\nu_\beta+1)} ).$	
	\end{enumerate}

\end{defn}
\begin{defn}
	\cite{torra} Given a fixed set $X, $ then a hesitant fuzzy set (shortly HFS) in $X$ is in terms of a function that when applied to $X$ return a subset of $[0, 1].$ We express the HFS by a mathematical symbol:\\
	$ F=\{<h, \mu_{F}(x)> : h\in X\}, $
	where $ \mu_{F}(x) $ is a set of some values in $[0,1],$ denoting the possible membership degrees of the element $ h\in X $ to the set $ F.$ $ \mu_{F}(x) $ is called a hesitant fuzzy element (HFE) and $ H $ is the set of all HFEs. 
\end{defn}

\begin{defn}
	\cite{torra} Given an hesitant fuzzy set $F, $ define below it lower and upper bound as \\
	lower bound $ F^{-}(x)=\min F(x).$\\
	upper bound $ F^{+}(x)=\max F(x).$
\end{defn}

\begin{defn}
	\cite{torra} Let $ \mu_{1},  \mu_{2}\in H $ and three operations are defined as follows:
	\begin{enumerate}
\item[(1)] $ \mu_{1}^C=\cup_{\gamma_{1}\in\mu_{1}} \{1-\gamma_{1}\}; $
	
	\item[(2)] $ \mu_{1}\cup\mu_{2}=\cup_{\gamma_{1}\in\mu_{1}, \gamma_{2}\in\mu_{2}}\max \{\gamma_{1},\gamma_{2} \} ; $
	
	\item[(3)] $ \mu_{1}\cap\mu_{2}=\cap_{\gamma_{1}\in\mu_{1}, \gamma_{2}\in\mu_{2}}\min \{\gamma_{1},\gamma_{2} \}.$
			\end{enumerate}
\end{defn}

\begin{defn}
\cite{chen} Let $X$ be a reference set, and $D[0,1]$ be the set of all closed subintervals of $[0,1].$ An IVHFS on $X$ is 
	$ F=\{<h_i, \mu_{F}(h_i)> : h_i\in X, i=1,2,...n\}, $
	where $\mu_{F}(h_i):X\rightarrow D[0,1]$ denotes all possible interval-valued membership degrees of the element $ h_i\in X $ to the set F. For convenience, we call $ \mu_{F}(h_i) $ an interval -valued hesitant fuzzy element (IVHFE), which reads
	$\mu_{F}(h_i)=\{\gamma: \gamma\in\mu_{F}(h_i) \}.$\\
Here $ \gamma=[\gamma^L, \gamma^U] $ is an interval number. $ \gamma^L=\inf \gamma $ and $ \gamma^U=\sup \gamma $ represent the lower and upper limits of $ \gamma,$ respectively. An IVHFE is the basic unit of an IVHFS and it can be considered as a special case of the IVHFS. The relationship between IVHFE and IVHFS is similar to that between interval-valued fuzzy number and interval-valued fuzzy set. 
\end{defn}
\begin{problem}\label{exam36}
	Let $ U=\{h_1, h_2\} $ be a reference set and let $ \mu_F(h_1)=\{[0.6,0.8],[0.2,0.7]\},\mu_F(h_2)=\{[0.1,0.4]\}  $ be the IVHFEs of $ h_i (i=1,2) $ to a set F respectively. Then IVHFS  F can be written as 
	 $F=\{<h_1,\{[0.6, 0.8], [0.2, 0.7]\}>, <h_2, \{[0.1, 0.4]\}>\}.$
\end{problem}

\begin{defn}
	\cite{xu9} Let $ \tilde{a}=[\tilde{a}^L,\tilde{a}^U ] $ and 
$ \tilde{b}=[\tilde{b}^L,\tilde{b}^U ] $ be two interval numbers and $ \lambda\geq0, $ then 

\begin{enumerate}
	\item[(i)] $\tilde{a}=\tilde{b}\Leftrightarrow\tilde{a}^L=\tilde{b}^L$ and $ \tilde{a}^U=\tilde{b}^U ;$
	\item[(ii)]$\tilde{a}+\tilde{b}=[\tilde{a}^L+\tilde{b}^L, \tilde{a}^U+\tilde{b}^U] ;$
	\item[(iii)]$ \lambda\tilde{a}=[\lambda\tilde{a}^L,\lambda\tilde{a}^U ] ,$ especially $ \lambda\tilde{a}=0, $ if $ \lambda=0.$ 
\end{enumerate}
\end{defn}

\begin{defn}
	\cite{xu9} Let $ \tilde{a}=[\tilde{a}^L,\tilde{a}^U ] $ and 
	$ \tilde{b}=[\tilde{b}^L,\tilde{b}^U ], $ and let $ l_a=\tilde{a}^U-\tilde{a}^L $ and $ l_b=\tilde{b}^U-\tilde{b}^L; $ then the degree of possibility of $\tilde{a}\geq\tilde{b}  $ is formulated by\\
	$p(\tilde{a}\geq\tilde{b})=\textrm{max}\{1-\textrm{max}(\frac{\tilde{b}^U-\tilde{a}^L}{l_{\tilde{a}}+l_{\tilde{b}}},0),0\}.$\\
	Above equation is proposed in order to compare two interval numbers, and to rank all the input arguments.
\end{defn}
\begin{defn}
	\cite{chen} For an IVHFE $ \tilde{\mu} ,$ $ s(\tilde{\mu}) =\frac{1}{l_{ \tilde{\mu}}}\sum_{ \tilde{\gamma}\in \tilde{\mu}}\tilde{\gamma}$ is called the score function of $ \tilde{\mu} $ with $l_{ \tilde{\mu}}$ being the number of the interval values in  $ \tilde{\mu}, $ and $ s(\tilde{\mu}) $ is an interval value belonging to $[0,1].$ For two IVHFEs $\tilde{\mu_1}  $ and $\tilde{\mu_2} , $ if $  s(\tilde{\mu_1})\geq s(\tilde{\mu_2}),$ then $\tilde{\mu_1}\geq \tilde{\mu_2}. $\\
We can judge the magnitude of two IVHFEs using above equation. 
\end{defn}
\begin{defn}
\cite{chen} Let $\tilde{\mu}, \tilde{\mu_1}$ and $ \tilde{\mu_2}$ be three IVHFEs, then 
\begin{enumerate}
	\item[(i)]$\tilde{\mu}^C=\{[1-\tilde{\gamma}^U,1-\tilde{\gamma}^L]:\tilde{\gamma}\in\tilde{\mu} \} ; $
\item[(ii)] $\tilde{\mu_1}\cup\tilde{\mu_2}=\{[\textrm{max}(\tilde{\gamma_1}^L,\tilde{\gamma_2}^L),\textrm{max}(\tilde{\gamma_1}^U,\tilde{\gamma_2}^U)]:\tilde{\gamma_1}\in\tilde{\mu_1}, \tilde{\gamma_2}\in\tilde{\mu_2} \} ; $

\item[(iii] $\tilde{\mu_1}\cap\tilde{\mu_2}=\{[\textrm{min}(\tilde{\gamma_1}^L,\tilde{\gamma_2}^L),\textrm{min}(\tilde{\gamma_1}^U,\tilde{\gamma_2}^U)]:\tilde{\gamma_1}\in\tilde{\mu_1}, \tilde{\gamma_2}\in\tilde{\mu_2} \} ; $
\item[(iv)] $\tilde{\mu_1}\oplus \tilde{\mu_2} =\{[\tilde{\gamma_1}^L+\tilde{\gamma_2}^L-\tilde{\gamma_1}^L.\tilde{\gamma_2}^L, \tilde{\gamma_1}^U+\tilde{\gamma_2}^U-\tilde{\gamma_1}^U.\tilde{\gamma_2}^U]:\tilde{\gamma_1}\in\tilde{\mu_1}, \tilde{\gamma_2}\in\tilde{\mu_2} \} ;  $
\item[(v)] $\tilde{\mu_1}\otimes \tilde{\mu_2} =\{[\tilde{\gamma_1}^L.\tilde{\gamma_2}^L,\tilde{\gamma_1}^U.\tilde{\gamma_2}^U]:\tilde{\gamma_1}\in\tilde{\mu_1}, \tilde{\gamma_2}\in\tilde{\mu_2} \}.$  
\end{enumerate}
\end{defn}
\begin{prop}
\cite{chen} For three IVHFEs $ \tilde{\mu}, \tilde{\mu_1} $ and $ \tilde{\mu_2} ,$ we have 
\begin{enumerate}
	\item[(i)]$\tilde{\mu_1}^C\cup\tilde{\mu_2}^C= (\tilde{\mu_1}\cap\tilde{\mu_2})^C; $
	
	\item[(ii)]$ \tilde{\mu_1}^C\cap\tilde{\mu_2}^C= (\tilde{\mu_1}\cup\tilde{\mu_2})^C; $
\end{enumerate}
\end{prop}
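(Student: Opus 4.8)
The plan is to verify both identities by a direct componentwise computation, reducing each side of each equation to the same explicit list of interval numbers. The only tools needed are the defining formulas for $\tilde{\mu}^C$, $\tilde{\mu_1}\cup\tilde{\mu_2}$ and $\tilde{\mu_1}\cap\tilde{\mu_2}$ given just above, together with the elementary identities $\max(1-a,1-b)=1-\min(a,b)$ and $\min(1-a,1-b)=1-\max(a,b)$, valid for all $a,b\in[0,1]$.

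For part (i), I would first expand the left-hand side. By definition of the complement, $\tilde{\mu_1}^C=\{[1-\tilde{\gamma_1}^U,1-\tilde{\gamma_1}^L]:\tilde{\gamma_1}\in\tilde{\mu_1}\}$ and likewise for $\tilde{\mu_2}^C$. Applying the union of IVHFEs to these two sets, a generic element of $\tilde{\mu_1}^C\cup\tilde{\mu_2}^C$ is $[\max(1-\tilde{\gamma_1}^U,1-\tilde{\gamma_2}^U),\max(1-\tilde{\gamma_1}^L,1-\tilde{\gamma_2}^L)]$; using $\max(1-a,1-b)=1-\min(a,b)$ on each coordinate, this equals $[1-\min(\tilde{\gamma_1}^U,\tilde{\gamma_2}^U),1-\min(\tilde{\gamma_1}^L,\tilde{\gamma_2}^L)]$ as $\tilde{\gamma_1}$ ranges over $\tilde{\mu_1}$ and $\tilde{\gamma_2}$ over $\tilde{\mu_2}$. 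On the right-hand side, $\tilde{\mu_1}\cap\tilde{\mu_2}=\{[\min(\tilde{\gamma_1}^L,\tilde{\gamma_2}^L),\min(\tilde{\gamma_1}^U,\tilde{\gamma_2}^U)]:\tilde{\gamma_1}\in\tilde{\mu_1},\tilde{\gamma_2}\in\tilde{\mu_2}\}$, and taking its complement produces precisely the same family $[1-\min(\tilde{\gamma_1}^U,\tilde{\gamma_2}^U),1-\min(\tilde{\gamma_1}^L,\tilde{\gamma_2}^L)]$ over the same index pairs. Hence the two sets coincide.

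For part (ii) the argument is entirely symmetric: I would expand $\tilde{\mu_1}^C\cap\tilde{\mu_2}^C$ using the intersection formula, apply $\min(1-a,1-b)=1-\max(a,b)$ coordinatewise, and compare the result with the complement of $\tilde{\mu_1}\cup\tilde{\mu_2}=\{[\max(\tilde{\gamma_1}^L,\tilde{\gamma_2}^L),\max(\tilde{\gamma_1}^U,\tilde{\gamma_2}^U)]:\tilde{\gamma_1}\in\tilde{\mu_1},\tilde{\gamma_2}\in\tilde{\mu_2}\}$; both reduce to $[1-\max(\tilde{\gamma_1}^U,\tilde{\gamma_2}^U),1-\max(\tilde{\gamma_1}^L,\tilde{\gamma_2}^L)]$.

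There is no substantial obstacle here — these are just the De Morgan laws transported through the coordinatewise action of $x\mapsto 1-x$. The one point deserving a word of care is the bookkeeping of index sets: one must note that complementation acts as a bijection on the constituent interval numbers of an IVHFE, so that after the componentwise interval identities are established, the pair $(\tilde{\gamma_1},\tilde{\gamma_2})$ genuinely runs over the same set on both sides, which upgrades the elementwise equality to an equality of the resulting (finite) collections of intervals.
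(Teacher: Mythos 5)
Your computation is correct: both identities reduce, coordinatewise, to the De Morgan laws $\max(1-a,1-b)=1-\min(a,b)$ and $\min(1-a,1-b)=1-\max(a,b)$ applied to the lower and upper endpoints, and your remark that complementation swaps the roles of the endpoints (so the ``$L$'' of an element of $\tilde{\mu_i}^C$ is $1-\tilde{\gamma_i}^U$) together with the bijection between $\tilde{\mu_i}$ and $\tilde{\mu_i}^C$ is exactly the bookkeeping needed. The paper itself states this proposition without proof, citing Chen et al., and your direct componentwise verification is precisely the standard argument one would supply.
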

\begin{defn}
	\cite{wang} Let $U$ be an initial universe and $E$ be a set of parameters. Let $\tilde{F}(U)$ be the set of all hesitant fuzzy subsets of $ U. $ Then $F_E$ is called a hesitant fuzzy soft set (HFSS) over $ U, $ where $\tilde {F}:E\rightarrow \tilde{F}(U). $\\
	A HFSS is a parameterized family of hesitant fuzzy subsets of $U,$ that is, $\tilde{F}(U).$ For all $\epsilon\tilde{\in} E,$  $F(\epsilon)$ is referred to as the set of $ \epsilon -$ approximate elements of the HFSS $F_E.$ It can be written as 	 $\tilde{F(\epsilon)}=\{<h, \mu_{\tilde{F(\epsilon)(x)}}> : h\in U\}.$\\
	 Since  HFE can represent the situation, in which different membership function are considered possible (see \cite{torra}), $ \mu_{\tilde{F(\epsilon)(x)}} $ is a set of several possible values, which is the hesitant fuzzy membership degree. In particular, if $ \tilde{F(\epsilon)} $ has only one element,  $ \tilde{F(\epsilon)} $  can be called a hesitant fuzzy soft number. For convenience, a hesitant fuzzy soft number (HFSN) is denoted by $ \{<h, \mu_{\tilde{F(\epsilon)(x)}}>\}.$ 
	 
\end{defn}
\begin{problem}\label{exam36}
		Suppose $ U=\{h_1,h_2\} $ be an initial universe and $ E=\{e_1, e_2, e_3, e_4\} $ be a set of parameters. Let $ A=\{e_1, e_2\} .$ Then the hesitant fuzzy soft set $ F_A $ is given as\\  $F_A=\{F(e_1)=\{<h_1,\{0.6, 0.8\}>, <h_2, \{0.8, 0.4, 0.9\}>\},  F(e_2)=\{<h_1,\{0.9, 0.1, 0.5\}>, <h_2,\{0.2 \}>\}.$
	
\end{problem}

\begin{defn}
\cite{zhang} Let $ (U,E) $ be a soft universe and $ A\subseteq E .$ Then $ F_A $ is called an interval valued hesitant fuzzy soft set over U, where $ F $ is a mapping given by $F:A \rightarrow IVHF(U). $\\
An interval-valued hesitant fuzzy soft set is a parameterized family of interval-valued hesitant fuzzy sub set of $ U. $ That is to say, $ F(e) $ is an interval-valued hesitant fuzzy subset in $ U,\forall e\in A. $ Following the standard notations, $ F(e) $ can be written as \\
  $\tilde{F(e)}=\{<h, \mu_{\tilde{F(e)(x)}}> : h\in U\}.$
\end{defn}

\begin{problem}\label{exam36}
		Suppose $ U=\{h_1,h_2\} $ be an initial universe and $ E=\{e_1, e_2, e_3, e_4\} $ be a set of parameters. Let $ A=\{e_1, e_2\} .$ Then the interval valued hesitant fuzzy  soft set $ F_A $ is given as\\
	$F_A=\{e_1=\{<h_1,[0.6, 0.8]>, <h_2, [0.1, 0.4]>\}\\  e_2=\{<h_1,[0.2, 0.6],[0.3,0.9] >, <h_2, [0.2,0.5],[0.2,0.8], [0.2,0.8]>\}.$
\end{problem}

\begin{defn}
\cite{zhang} $ U $ be an initial universe and let $ E $ be a set of parameters. Supposing that $ A, B \tilde{\subseteq}E, F_A $and $ F_B $ are two interval-valued hesitant fuzzy soft sets, one says that $ F_A $ is an interval-valued hesitant fuzzy soft subset of $ G_B $ if and only if 
\begin{enumerate}
\item[(i)] $ A\tilde{\subseteq}B, $
\item[(ii)]$\gamma_1^{\sigma(k)} \tilde{\leq}\gamma_2^{\sigma(k)},$
\end{enumerate}
where for all $ e\tilde{\in}A,~ x\tilde{\in}U,~ \gamma_1^{\sigma(k)} $and $\gamma_2^{\sigma(k)}  $ stand for the kth largest interval number in the IVHFEs $~ \mu_{F(e)(x)} $ and $\mu_{ G(e)(x)}, $ respectively. In this case, we write $ F_A\tilde{\subseteq}G_A. $
\end{defn}
\begin{defn}
	\cite{zhang} The complement of $ F_A, $ denoted by $ F_A^C, $ is defined by $F_A^C(e)=\{<h, \mu_{\tilde{F^C(e)(x)}}> : h\in U\} , $ where $ \mu_{F^C}:A\rightarrow IVHF(U) $is a mapping given by $\mu_{\tilde{F^C(e)}}, \forall e\tilde{\in}A $ such that $\mu_{\tilde{F^C(e)}} $ is the complement of interval-valued hesitant fuzzy element $ \mu_{\tilde{F(e)}}  $ on $ U. $
\end{defn}
\begin{defn}
	\cite{zhang} An interval-valued hesitant fuzzy soft set is said to be an empty interval-valued hesitant fuzzy soft set, denoted by $ \tilde{\phi},$ if $ F:E\rightarrow IVHF(U)$ such that $\tilde{F(e)}=\{<h, \mu_{\tilde{F(e)(x)}}> : h\in U\}=\{<h, \{[0,0]\}> : h\in U\}, \forall e\tilde{\in}E. $ 
\end{defn}

\begin{defn}
	\cite{zhang} An interval-valued hesitant fuzzy soft set is said to be an full interval-valued hesitant fuzzy soft set, denoted by $ \tilde{E},$ if $ F:E\rightarrow IVHF(U)$ such that\\ $\tilde{F(e)}=\{<h, \mu_{\tilde{F(e)(x)}}> : h\in U\}=\{<h, \{[1,1]\}> : h\in U\}, \forall e\tilde{\in}E. $ 
\end{defn}
\begin{defn}
	\cite{zhang} The ring sum operation on the two  interval-valued hesitant fuzzy soft sets $F_A, G_B  $ over $ (U,E),  $ denoted by $ F_A\oplus G_A=H, $ is a mapping given by 	 $ H:E\rightarrow IVHF(U) $
	  such that\\ 
	  $ \forall e\tilde{\in}E $ 
	  $\tilde{H(e)}=\{<h, \mu_{\tilde{H(e)(x)}}> : h\in U\}=\{<h, \mu_{\tilde{H(e)(x)}}\oplus \mu_{\tilde{G(e)(x)}}> : h\in U\}, \forall e\tilde{\in}E. $ 
\end{defn}
\begin{defn}
	\cite{zhang} The ring product operation on the two  interval-valued hesitant fuzzy soft sets $F_A, G_B  $ over $ (U,E),  $ denoted by $ F_A\otimes G_A=H, $ is a mapping given by
	$ H:E\rightarrow IVHF(U)$
	such that\\ $ \forall e\tilde{\in}E $
	$\tilde{H(e)}=\{<h, \mu_{\tilde{H(e)(x)}}> : h\in U\}=\{<h, \mu_{\tilde{H(e)(x)}}\otimes \mu_{\tilde{G(e)(x)}}> : h\in U\},\forall e\tilde{\in}E. $ 
\end{defn}
\section{Main Results }

\begin{defn}
The union of two interval-valued hesitant fuzzy soft sets $ F_A $ and $ G_B $ over $ (U,E) ,$ is the interval-valued hesitant fuzzy soft set $ H_C ,$ where $ C=A\cup B $ and  $\forall e\tilde{\in}C,$\\

$\mu_{H(e)}=\begin{cases}
	\mu_{F(e)}, & \textrm{if $e\tilde{\in}A-B;$}\\
	\mu_{G(e)}, & \textrm{if $ e\tilde{\in}A-B;$}\\
	\mu_{F(e)}\cup \mu_{G(e)}, & \textrm{if $e\tilde{\in}A\cap B.$}\end{cases}$
	
	We write $ F_A \tilde{\cup}G_B=H_C.$
\end{defn}
\begin{problem}
	Let  $F_A=\{e_1=\{<h_1,[0.3, 0.8]>, <h_2, [0.3, 0.8],[0.5,0.6],[0.3,0.6]>\}\\  e_2=\{<h_1,[0.2, 0.9],[0.7,1.0] >, <h_2, [0.8,1.0],[0.2,0.6]>\}.$\\
	
	$G_B=\{e_1=\{<h_1,[0.7, 0.9],[0.0,0.6]>, <h_2, [0.4, 0.7],[0.4,0.5]>\}\\  e_2=\{<h_1,[0.6, 0.8]>, <h_2, [0.3,0.8],[0.3,0.6]>\}\\
 e_3=\{<h_1,[0.5, 0.6],[0.3,0.6] >, <h_2, [0.1,0.6],[0.3,0.9],[0.3,0.6]>\}.$\\
 
  Now rearrange the membership value of	$ F_A$ and $ G_B$  with the help of  Definitions 2.9 , 2.10 and assumptions given by \cite{chen}, we have\\
  $F_A=\{e_1=\{<h_1,[0.3, 0.8]>, <h_2,[0.3,0.6], [0.3, 0.8],[0.5,0.6]>\}\\  e_2=\{<h_1,[0.2, 0.9],[0.7,1.0] >, <h_2, [0.2,0.6],[0.8,1.0]>\}.$\\
  
  	$G_B=\{e_1=\{<h_1,[0.0,0.6],[0.7, 0.9]>, <h_2, [0.4,0.5],[0.4, 0.7],[0.4,0.7]>\}\\  e_2=\{<h_1,[0.6, 0.8]>, <h_2, [0.3,0.6],[0.3,0.8]>\}\\
  	e_3=\{<h_1,[0.3,0.6] ,[0.5, 0.6]>, <h_2, [0.1,0.6],[0.3,0.9],[0.3,0.6]>\}.$\\
  	
	Therefore \\
	$ F_A \tilde{\cup}G_B= H_{A\tilde{\cup}B}=H_C\\
	= \{e_1=\{<h_1,[0.3, 0.8],[0.7,0.9]>, <h_2, [0.4, 0.6],[0.4,0.8][0.5,0.7]>\}\\  e_2=\{<h_1,[0.6, 0.9][0.7,1.0]>, <h_2, [0.3,0.6],[0.8,1.0]>\}\\
	e_3=\{<h_1,[0.3, 0.6],[0.5,0.6] >, <h_2, [0.1,0.6],[0.3,0.9],[0.3,0.6]>\}.$
	
\end{problem}

\begin{defn}
 The intersection of two interval-valued hesitant fuzzy soft sets $ F_A $ and $ G_B $ with $ A\cap B\neq\phi $ over $ (U,E) ,$ is the interval-valued hesitant fuzzy soft set $ H_C ,$ where $ C=A\cap B, $ and  $\forall e\tilde{\in}C, \mu_{ H(e)}=\mu_{F(e)}\cap \mu_{G(e)}.$ 
	We write $ F_A \tilde{\cap}G_B=H_C.$	

\end{defn}

\begin{problem} From Example 3.2, we have\\
	$ F_A \tilde{\cap}G_B= H_{A\tilde{\cap}B}=H_C$\\
$	= \{e_1=\{<h_1,[0.0, 0.6],[0.3,0.8]>, <h_2, [0.3, 0.5],[0.3,0.7][0.4,0.6]>\}\\  e_2=\{<h_1,[0.2, 0.8][0.6,0.8]>, <h_2, [0.2,0.6],[0.3,0.8]>\}.$
\end{problem}
\begin{prop}\label{prop38}
Let 	$ F_A$ be a interval-valued hesitant fuzzy soft set. Then the following are true:
\begin{enumerate}
\item[(i)]  $F_A \tilde{\cup}F_A=F_A$
\item[(ii)]  $F_A \tilde{\cap}F_A=F_A$
\item[(iii)]  $F_A \tilde{\cup}\tilde{\phi_A}=F_A$
\item[(iv)]   $F_A \tilde{\cap}\tilde{\phi_A}=\tilde{\phi_A}$
\item[(v)]  $F_A \tilde{\cup}\tilde{E_A}=\tilde{E_A}$
\item[(vi)] $F_A \tilde{\cap}\tilde{E_A}=F_A.$
\end{enumerate}
\end{prop}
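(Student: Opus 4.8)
The plan is to strip away the parameter-set bookkeeping and reduce each of (i)--(vi) to a pointwise statement about a single interval-valued hesitant fuzzy element (IVHFE). In (i) the two operands share the parameter set $A$, so the output parameter set is $C = A \cup A = A$; in (ii), since $A \cap A = A \neq \emptyset$, Definition 3.3 applies and $C = A$ again; in (iii)--(vi) the empty soft set $\tilde{\phi}_A$ and the full soft set $\tilde{E}_A$ are taken over the \emph{same} parameter set $A$, so once more $C = A$. Consequently, for every $e \tilde{\in} A$ we land in the ``$A \cap B$'' branch of Definition 3.1 (resp.\ Definition 3.3), and it suffices to prove, for each such $e$ and each $h \in U$, the corresponding identity between IVHFEs: $\mu_{F(e)(x)} \cup \mu_{F(e)(x)} = \mu_{F(e)(x)}$ for (i), $\mu_{F(e)(x)} \cap \mu_{F(e)(x)} = \mu_{F(e)(x)}$ for (ii), $\mu_{F(e)(x)} \cup \{[0,0]\} = \mu_{F(e)(x)}$ for (iii), and so on, where $\{[0,0]\}$ and $\{[1,1]\}$ are the IVHFEs prescribed by Definitions 2.20 and 2.21.

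Next I would pin down the reading of $\cup$ and $\cap$ on IVHFEs used in Examples 3.2--3.4: following Definitions 2.9--2.10 and the conventions of Chen et al.\ \cite{chen}, one first lists the interval numbers of each IVHFE in decreasing order (by the score function / degree of possibility) and, when two IVHFEs have unequal cardinality, pads the shorter by repeating entries until the lengths agree; the $\max$/$\min$ of Definition 2.12(ii)--(iii) is then applied entrywise to the $k$th largest interval numbers. Writing $\mu_{F(e)(x)} = \{\gamma^{\sigma(1)}, \dots, \gamma^{\sigma(n)}\}$ with $\gamma^{\sigma(k)} = [\gamma^{\sigma(k)L}, \gamma^{\sigma(k)U}]$, each part becomes a one-line check. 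For (i) the $k$th entry of $\mu_{F(e)(x)} \cup \mu_{F(e)(x)}$ is $[\max(\gamma^{\sigma(k)L}, \gamma^{\sigma(k)L}), \max(\gamma^{\sigma(k)U}, \gamma^{\sigma(k)U})] = \gamma^{\sigma(k)}$, so the union collapses to $\mu_{F(e)(x)}$; (ii) is the same computation with $\min$. For (iii)--(vi) I would pad $\{[0,0]\}$ (resp.\ $\{[1,1]\}$) to length $n$ and invoke the elementary facts, valid for every $t \in [0,1]$, that $\max(t,0) = t$, $\min(t,0) = 0$, $\max(t,1) = 1$ and $\min(t,1) = t$, applied to $t = \gamma^{\sigma(k)L}$ and $t = \gamma^{\sigma(k)U}$; this gives $\mu_{F(e)(x)} \cup \{[0,0]\} = \mu_{F(e)(x)}$, $\mu_{F(e)(x)} \cap \{[0,0]\} = \{[0,0]\}$, $\mu_{F(e)(x)} \cup \{[1,1]\} = \{[1,1]\}$ and $\mu_{F(e)(x)} \cap \{[1,1]\} = \mu_{F(e)(x)}$. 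Since these hold for every $e \tilde{\in} A$ and every $h \in U$, Definitions 3.1 and 3.3 re-assemble them into the six soft-set identities, with $\tilde{\phi}_A$ and $\tilde{E}_A$ recognised via Definitions 2.20--2.21.

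The only genuine subtlety — and the step I would state explicitly at the outset — is this first reduction of $\cup$ and $\cap$ to an \emph{entrywise} $\max$/$\min$ on sorted, length-matched IVHFEs. Read literally as ranging over \emph{all} pairs $(\gamma_1, \gamma_2)$, Definition 2.12(ii)--(iii) need not yield idempotence: $\{[0.1,0.4],[0.2,0.3]\}$ unioned with itself manufactures the new interval $[0.2,0.4]$. So I would flag that the sorted-and-padded convention of \cite{chen} in force throughout Section 3 is exactly what makes (i) and (ii) work; granting it, all six identities collapse to the trivial scalar facts above.
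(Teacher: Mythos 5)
Your proposal is correct, and it supplies considerably more than the paper does: the paper's entire proof of this proposition is the single word ``Obvious.'' Your reduction is the right one --- in all six identities the two operands carry the same parameter set $A$, so every $e\tilde{\in}A$ falls into the $A\cap B$ branch of Definition 3.1 (resp.\ Definition 3.3), and everything collapses to six one-line identities between IVHFEs, which in turn reduce to $\max(t,t)=t$, $\min(t,t)=t$, $\max(t,0)=t$, $\min(t,0)=0$, $\max(t,1)=1$, $\min(t,1)=t$ applied to the endpoints. The most valuable part of your write-up is precisely the ``subtlety'' you flag at the end: read literally, Definition 2.12(ii)--(iii) takes $\max$/$\min$ over \emph{all} pairs $(\tilde{\gamma_1},\tilde{\gamma_2})$, under which idempotence genuinely fails (your example $\{[0.1,0.4],[0.2,0.3]\}$ unioned with itself producing the new interval $[0.2,0.4]$ is a correct counterexample). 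Only the sort-by-score-and-pad convention of Chen et al., which the paper invokes implicitly in Examples 3.2 and 3.4 but never states as part of the operative definition, makes (i) and (ii) true. So your proof is not merely a fleshed-out version of the paper's ``Obvious''; it identifies the precise convention on which the truth of the proposition depends, which the paper should have made explicit. One very minor point you could add: in (iv) the padded result is $n$ copies of $[0,0]$, which you should note coincides with $\{[0,0]\}$ as an IVHFE (and likewise $n$ copies of $[1,1]$ in (v)), so that the output really is $\tilde{\phi_A}$ (resp.\ $\tilde{E_A}$) as defined in Definitions 2.20--2.21.
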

\begin{proof}
 Obvious. 
\end{proof}

\begin{prop}\label{prop38}
Let $ F_A $ and $ G_A $ are two interval-valued hesitant fuzzy soft sets. Then 
\begin{enumerate}
\item[(i)] $(F_A\tilde{\cup}G_A)^C=F_A^C\tilde{\cap}G_A^C$
\item[(ii)] $(F_A\tilde{\cap}G_A)^C=F_A^C\tilde{\cup}G_A^C.$
\end{enumerate}
\end{prop}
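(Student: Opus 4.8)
The plan is to push both De Morgan identities down to the level of interval-valued hesitant fuzzy elements, where the matching identities are already available as Proposition 2.13. Since $F_A$ and $G_A$ are built over the \emph{same} parameter set $A$, every soft set occurring in either equation is defined over $A$ (because $A\cup A=A\cap A=A$, so by Definitions 3.1 and 3.3 the union and intersection are again indexed by $A$, and taking complements does not change the parameter set). Hence two of these soft sets coincide precisely when, for every $e\tilde{\in}A$ and every $h\in U$, the IVHFEs they attach to $(e,h)$ are equal as sets of interval numbers. So it suffices to verify that equality on the two sides of (i) and of (ii).

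First I would fix $e\tilde{\in}A$ and $h\in U$ and unwind the definitions for (i). Since $e\tilde{\in}A\cap A$, Definition 3.1 says $F_A\tilde{\cup}G_A$ carries at $(e,h)$ the IVHFE $\mu_{F(e)(x)}\cup\mu_{G(e)(x)}$, so by Definition 2.19 (together with Definition 2.12(i)) its complement carries $\bigl(\mu_{F(e)(x)}\cup\mu_{G(e)(x)}\bigr)^{C}$. On the other side, $F_A^{C}$ and $G_A^{C}$ carry $\mu_{F(e)(x)}^{C}$ and $\mu_{G(e)(x)}^{C}$ at $(e,h)$, and then Definition 3.3 shows $F_A^{C}\tilde{\cap}G_A^{C}$ carries $\mu_{F(e)(x)}^{C}\cap\mu_{G(e)(x)}^{C}$. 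The identity $\bigl(\mu_{F(e)(x)}\cup\mu_{G(e)(x)}\bigr)^{C}=\mu_{F(e)(x)}^{C}\cap\mu_{G(e)(x)}^{C}$ is exactly Proposition 2.13(ii), so (i) follows. Part (ii) is entirely symmetric: the same unwinding produces $\bigl(\mu_{F(e)(x)}\cap\mu_{G(e)(x)}\bigr)^{C}$ on the left and $\mu_{F(e)(x)}^{C}\cup\mu_{G(e)(x)}^{C}$ on the right, which agree by Proposition 2.13(i).

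Since $e$ and $h$ were arbitrary and the parameter sets already match, this proves both equalities. I do not expect a real obstacle; the only point worth stating carefully is that equality of these soft sets means equality of the underlying IVHFEs (viewed as sets of interval numbers, cf.\ Definition 2.18 with $A=B$), so the $\sigma(k)$-reordering plays no role in the argument. If a self-contained argument were wanted, Proposition 2.13 can be recovered from Definition 2.12(i)--(iii) by applying the scalar identities $1-\max(a,b)=\min(1-a,1-b)$ and $1-\min(a,b)=\max(1-a,1-b)$ to the lower and upper endpoints of each interval; but citing Proposition 2.13 directly keeps the proof short.
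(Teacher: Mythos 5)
Your proposal is correct and follows essentially the same route as the paper: the paper's own proof also fixes $e\tilde{\in}A$, reduces both sides to the IVHFE-level identity $(\mu_{F(e)}\cup\mu_{G(e)})^{C}=\mu_{F^{C}(e)}\cap\mu_{G^{C}(e)}$ (and its dual), which is exactly Proposition 2.13. You are merely more explicit than the paper in citing that proposition, in checking that the parameter sets agree, and in noting what equality of the soft sets means at the level of IVHFEs.
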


\begin{proof}
(i) Let $ F_A^C \tilde{\cap}G_A^C=H_A.$\\
We have $ \forall e\tilde{\in} A,\mu_{ H(e)}=\mu_{F^C(e)}\cap \mu_{G^C(e)}. $	..................(A1)\\
Suppose that $ F_A\tilde{\cup}G_A=L_A $\\
Therefore, $ (F_A\tilde{\cup}G_A)^C=L^C_A.$\\
We have $ \forall e\tilde{\in} A,\mu_{ L^C(e)}=(\mu_{F(e)}\cup \mu_{G(e)})^C=\mu_{F^C(e)}\cap \mu_{G^C(e)}. $	..................(A2)\\

From (A1) and (A2), $(F_A\tilde{\cup}G_A)^C=F_A^C\tilde{\cap}G_A^C.$\\

(ii) Let $ F_A^C \tilde{\cup}G_A^C=P_A.$\\
We have $ \forall e\tilde{\in} A,\mu_{ P(e)}=\mu_{F^C(e)}\cup \mu_{G^C(e)}. $	..................(B1)\\
Suppose that $ F_A\tilde{\cap}G_A=Q_A $\\
Therefore $ (F_A\tilde{\cap}G_A)^C=Q^C_A.$\\
We have $ \forall e\tilde{\in} A,\mu_{ Q^C(e)}=(\mu_{F(e)}\cap \mu_{G(e)})^C=\mu_{F^C(e)}\cup \mu_{G^C(e)}. $	..................(B2)\\

From (B1) and (B2), $(F_A\tilde{\cap}G_A)^C=F_A^C\tilde{\cup}G_A^C.$
\end{proof}

\begin{prop}\label{prop38}
	Let 	$ F_A$ and $ G_B $ are two interval-valued hesitant fuzzy soft sets. Then the following  are satisfied:
\begin{enumerate}
	\item[(i)] $F_A^C\tilde{\cap}G_B^C\tilde{\subseteq}(F_A\tilde{\cup}G_B)^C$
	\item[(ii)] $(F_A\tilde{\cap}G_B)^C\tilde{\subseteq}F_A^C\tilde{\cup}G_B^C$
	\item[(iii)]$ F_A^C\tilde{\cap}G_B^C\tilde{\subseteq}(F_A\tilde{\cap}G_B)^C $
	\item[(iv)] $ (F_A\tilde{\cup}G_B)^C\tilde{\subseteq}F_A^C\tilde{\cup}G_B^C.$
\end{enumerate}
\end{prop}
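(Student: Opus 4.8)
The plan is to unwind the operators $(\cdot)^C$, $\tilde{\cup}$ and $\tilde{\cap}$ on both sides of each of the four inclusions and then verify the two conditions required by the definition of an interval-valued hesitant fuzzy soft subset: containment of the underlying parameter sets, and the coordinatewise inequality $\gamma_1^{\sigma(k)}\tilde{\leq}\gamma_2^{\sigma(k)}$ between the $k$th largest interval numbers of the corresponding IVHFEs at every parameter and every point of $U$. Unlike the preceding proposition, the two soft sets here carry possibly different parameter sets $A$ and $B$, which is exactly why one obtains inclusions rather than equalities. The only algebraic input needed is the pair of De Morgan identities for IVHFEs, $(\tilde{\mu_1}\tilde{\cup}\tilde{\mu_2})^C=\tilde{\mu_1}^C\tilde{\cap}\tilde{\mu_2}^C$ and $(\tilde{\mu_1}\tilde{\cap}\tilde{\mu_2})^C=\tilde{\mu_1}^C\tilde{\cup}\tilde{\mu_2}^C$, i.e. parts (i) and (ii) of Chen et al.'s proposition recalled above, read from right to left. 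We tacitly assume $A\cap B\neq\phi$ so that the soft-set intersections occurring in (i), (ii) and (iii) are defined in the sense of Definition 3.3.

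Before the four cases I would record one elementary fact: for any two IVHFEs $\tilde{\mu_1},\tilde{\mu_2}$ one has $\tilde{\mu_1}\tilde{\cap}\tilde{\mu_2}\tilde{\subseteq}\tilde{\mu_1}\tilde{\cup}\tilde{\mu_2}$, meaning that once both are arranged in decreasing order the $k$th largest interval of $\tilde{\mu_1}\tilde{\cap}\tilde{\mu_2}$ is $\tilde{\leq}$ the $k$th largest interval of $\tilde{\mu_1}\tilde{\cup}\tilde{\mu_2}$. This holds because both families are indexed by the same pairs $(\tilde{\gamma_1},\tilde{\gamma_2})$ with $\tilde{\gamma_1}\tilde{\in}\tilde{\mu_1},\ \tilde{\gamma_2}\tilde{\in}\tilde{\mu_2}$, and for each such pair $[\min(\gamma_1^L,\gamma_2^L),\min(\gamma_1^U,\gamma_2^U)]\tilde{\leq}[\max(\gamma_1^L,\gamma_2^L),\max(\gamma_1^U,\gamma_2^U)]$; a coordinatewise inequality between two equinumerous families of intervals survives sorting.

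For (i) and (ii) the two sides agree on the overlap of the parameter sets and differ only in domain. Writing $F_A\tilde{\cup}G_B=H_{A\cup B}$, the set $(F_A\tilde{\cup}G_B)^C$ has parameter set $A\cup B$ and, at each $e\tilde{\in}A\cap B$, IVHFE $(\mu_{F(e)}\tilde{\cup}\mu_{G(e)})^C=\mu_{F^C(e)}\tilde{\cap}\mu_{G^C(e)}$ by De Morgan; this is exactly the IVHFE of $F_A^C\tilde{\cap}G_B^C$ at $e$, while the parameter set $A\cap B$ of $F_A^C\tilde{\cap}G_B^C$ is contained in $A\cup B$, so the subset definition gives (i) at once. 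Inclusion (ii) is the mirror image: $(F_A\tilde{\cap}G_B)^C$ has parameter set $A\cap B$ and IVHFE $(\mu_{F(e)}\tilde{\cap}\mu_{G(e)})^C=\mu_{F^C(e)}\tilde{\cup}\mu_{G^C(e)}$ at $e\tilde{\in}A\cap B$, which coincides with that of $F_A^C\tilde{\cup}G_B^C$ there, and again $A\cap B\subseteq A\cup B$. For (iii) and (iv) the parameter sets of the two sides coincide, so the whole content is the elementary fact: in (iii) both sides carry parameter set $A\cap B$, and at $e\tilde{\in}A\cap B$ the left IVHFE is $\mu_{F^C(e)}\tilde{\cap}\mu_{G^C(e)}$ whereas the right is $(\mu_{F(e)}\tilde{\cap}\mu_{G(e)})^C=\mu_{F^C(e)}\tilde{\cup}\mu_{G^C(e)}$; in (iv) both sides carry parameter set $A\cup B$, the two IVHFEs are literally equal on $A-B$ and on $B-A$, and on $A\cap B$ the left one is $(\mu_{F(e)}\tilde{\cup}\mu_{G(e)})^C=\mu_{F^C(e)}\tilde{\cap}\mu_{G^C(e)}$ against the right one $\mu_{F^C(e)}\tilde{\cup}\mu_{G^C(e)}$. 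In each of these the elementary fact supplies the required comparison of $k$th largest intervals. Along the way one corrects the evident typo in Definition 3.1, whose second branch should read ``if $e\tilde{\in}B-A$''.

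The only step needing genuine care, and the main obstacle, is the elementary fact itself: one must pin down the total preorder used to arrange an IVHFE in decreasing order (via the score function, or the degree of possibility) and the convention for padding $\tilde{\mu_1}\tilde{\cap}\tilde{\mu_2}$ or $\tilde{\mu_1}\tilde{\cup}\tilde{\mu_2}$ when they have different cardinalities, and then check that this rearrangement is monotone with respect to the coordinatewise order $\tilde{\leq}$ on intervals. Once that is settled, all four inclusions are routine bookkeeping with the sets $A-B$, $B-A$, $A\cap B$, $A\cup B$ and the two IVHFE De Morgan identities; nothing deeper is involved.
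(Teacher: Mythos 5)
Your route is genuinely different from the paper's: the paper does not give a general argument at all, but merely computes both sides of (i) and (ii) on the numerical data of Example 3.2 and then asserts that (iii) and (iv) ``follow from (i) and (ii)''. You instead unwind the definitions in general: for (i) and (ii) you observe that the two sides have literally identical IVHFEs at every $e\tilde{\in}A\cap B$ (by Chen et al.'s De Morgan laws for IVHFEs) and differ only in their parameter sets, $A\cap B$ versus $A\cup B$, so the subset relation reduces to $A\cap B\subseteq A\cup B$. This part of your argument is complete, correct, and strictly stronger than what the paper offers. Your reduction of (iii) and (iv) to the single lemma $\tilde{\mu_1}\tilde{\cap}\tilde{\mu_2}\tilde{\subseteq}\tilde{\mu_1}\tilde{\cup}\tilde{\mu_2}$ is also the right structural move, and is more honest than the paper's unsupported claim that (iii) and (iv) are consequences of (i) and (ii) (complementation does not reverse $\tilde{\subseteq}$ here, since it preserves parameter sets).

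The genuine gap is the ``elementary fact'' itself, which you flag but do not prove, and which is false in the generality in which you state it. ``A coordinatewise inequality between two equinumerous families of intervals survives sorting'' fails for the possibility-degree ranking of Definition 2.10: take $a_1=[0.39,0.6]$, $a_2=[0.5,0.5]$ and $b_1=[0.39,0.8]$, $b_2=[0.5,0.5]$, so that $a_i\tilde{\leq}b_i$ for $i=1,2$. One computes $p(a_1\geq a_2)=1-\tfrac{0.11}{0.21}\approx0.48<0.5$, so $a_2$ is ranked largest in the first family, while $p(b_1\geq b_2)=1-\tfrac{0.11}{0.41}\approx0.73>0.5$, so $b_1$ is ranked largest in the second; the first-ranked intervals are then $[0.5,0.5]$ and $[0.39,0.8]$, and $[0.5,0.5]\tilde{\leq}[0.39,0.8]$ fails because $0.5>0.39$. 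The rearrangement is therefore \emph{not} monotone for the coordinatewise order, precisely because the possibility-degree total preorder is not compatible with it. To rescue (iii) and (iv) you would need an argument specific to the families $\{[\min(\gamma_1^L,\gamma_2^L),\min(\gamma_1^U,\gamma_2^U)]\}$ and $\{[\max(\gamma_1^L,\gamma_2^L),\max(\gamma_1^U,\gamma_2^U)]\}$ indexed by the same pairs $(\tilde{\gamma_1},\tilde{\gamma_2})\in\tilde{\mu_1}\times\tilde{\mu_2}$, showing that for these particular families the $k$th ranked element of the min-family is $\tilde{\leq}$ the $k$th ranked element of the max-family; this does not follow from the pairwise inequality alone, and neither you nor the paper supplies it.
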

\begin{proof}
		From Example 3.2\\
	(i)  $(F_A\tilde{\cup}G_B)^C= \{e_1=\{<h_1,[0.1, 0.3],[0.2,0.7]>, <h_2, [0.3, 0.5],[0.2,0.6][0.4,0.6]>\}\\  e_2=\{<h_1,[0.0, 0.3][0.1,0.4]>, <h_2, [0.0,0.2],[0.4,0.7]>\}\\
	 e_3=\{<h_1,[0.4, 0.5],[0.4,0.7] >, <h_2, [0.4,0.7],[0.1,0.7],[0.4,0.9]>\}.$\\

	 $F_A^C=\{e_1=\{<h_1,[0.2, 0.7]>, <h_2, [0.4, 0.5],[0.2,0.7],[0.4,0.7]>\}\\  e_2=\{<h_1,[0.0, 0.3],[0.1,0.8] >, <h_2, [0.0,0.2],[0.4,0.8]>\}.$\\
	 
	 $G_B^C=\{e_1=\{<h_1,[0.1, 0.3],[0.4,1.0]>, <h_2,[0.3,0.6],[0.3,0.6],[0.5,0.6]>\}\\  e_2=\{<h_1,[0.2, 0.4]>, <h_2, [0.2,0.7],[0.4,0.7]>\}\\
	 e_3=\{<h_1,[0.4, 0.5],[0.4,0.7] >, <h_2, [0.4,0.7],[0.1,0.7],[0.4,0.9]>\}.$\\ 
	 
		$ F_A^C\tilde{\cap}G_B^C= \{e_1=\{<h_1,[0.1, 0.3],[0.2,0.7]>, <h_2, [0.3, 0.5],[0.2,0.6][0.4,0.6]>\}\\  e_2=\{<h_1,[0.0, 0.3][0.1,0.4]>, <h_2, [0.0,0.2],[0.4,0.7]>\}.$\\ 
		
		Hence $F_A^C\tilde{\cap}G_B^C\tilde{\subseteq}(F_A\tilde{\cup}G_B)^C.$\\
	
(ii) 		$ (F_A \tilde{\cap}G_B)^C
= \{e_1=\{<h_1,[0.2, 0.7],[0.4,1.0]>, <h_2, [0.4, 0.6],[0.3,0.7][0.5,0.7]>\}\\  e_2=\{<h_1,[0.2, 0.4][0.2,0.8]>, <h_2, [0.2,0.7],[0.4,0.8]>\}.$\\
	
$ F_A^C \tilde{\cup}G_B^C
= \{e_1=\{<h_1,[0.2, 0.7],[0.4,1.0]>, <h_2, [0.4, 0.6],[0.3,0.7][0.5,0.7]>\}\\  e_2=\{<h_1,[0.2, 0.4][0.2,0.8]>, <h_2, [0.2,0.7],[0.4,0.8]>\}\\  e_3=\{<h_1,[0.4, 0.5],[0.4,0.7] >, <h_2, [0.4,0.7],[0.1,0.7],[0.4,0.9]>\}.$\\
Hence  $(F_A\tilde{\cap}G_B)^C\tilde{\subseteq}F_A^C\tilde{\cup}G_B^C.$\\

(iii) From (i) and (ii) we get the result.\\
 
(iv) From (i) and (ii) we get the result. 
	\end{proof}

\begin{prop}\label{prop38}
	Let 	$ F_A, G_B$ and $ H_C $ are three interval-valued hesitant fuzzy soft sets. Then the following are satisfied:
	
	\begin{enumerate}
		\item[(i)] $F_A\tilde{\cup}G_B=G_B\tilde{\cup}F_A$
		\item[(ii)]$F_A\tilde{\cap}G_B=G_B\tilde{\cap}F_A$
		\item[(iii)] $F_A\tilde{\cup}(G_B\tilde{\cup}H_C)=(F_A\tilde{\cup} G_B)\tilde{\cup}H_C$
		
		\item[(iv)] $F_A\tilde{\cap}(G_B\tilde{\cap}H_C)=(F_A\tilde{\cap} G_B)\tilde{\cap}H_C.$
	\end{enumerate}
\end{prop}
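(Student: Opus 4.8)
The plan is to push every identity down to the pointwise level. An interval-valued hesitant fuzzy soft set is completely determined by its parameter set together with the IVHFE it attaches to each parameter, so each of the four assertions splits into (a) a set identity for the parameter sets $A,B,C$ and (b) for every fixed $e$, an identity among IVHFEs, which by Definition 2.12(ii)--(iii) is nothing more than an identity about $\max$ and $\min$ of the endpoints of interval numbers. Throughout I will assume, as the definition of intersection requires, that the relevant intersections of parameter sets are nonempty, so that every object written down is defined.

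For the commutativity parts (i) and (ii): by Definition 3.1 the parameter set of $F_A\tilde{\cup}G_B$ is $A\cup B=B\cup A$, so the two sides of (i) are defined on the same set of parameters, and it remains to compare the attached IVHFEs. For $e\tilde{\in}A-B$ or $e\tilde{\in}B-A$ both sides carry the same IVHFE by direct inspection of the clauses of Definition 3.1; for $e\tilde{\in}A\cap B$ one needs $\mu_{F(e)}\cup\mu_{G(e)}=\mu_{G(e)}\cup\mu_{F(e)}$, and by Definition 2.12(ii) both sides of this equal $\{[\max(\gamma_1^L,\gamma_2^L),\max(\gamma_1^U,\gamma_2^U)]:\gamma_1\in\mu_{F(e)},\,\gamma_2\in\mu_{G(e)}\}$, so symmetry of $\max$ closes the case. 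Part (ii) is the same argument, using $A\cap B=B\cap A$ for the parameter set and $\min$ (Definition 2.12(iii)) in place of $\max$.

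For the associativity parts (iii) and (iv): both sides of (iii) are defined on $A\cup(B\cup C)=(A\cup B)\cup C$, so once more it suffices to compare IVHFEs parameterwise. I would partition $A\cup B\cup C$ into the seven disjoint regions cut out by membership in $A$, $B$ and $C$ — the three regions lying in exactly one of the sets, the three lying in exactly two, and $A\cap B\cap C$ — and on each region expand Definition 3.1 one step at a time for both bracketings, checking that they select the same clause. On the six regions other than $A\cap B\cap C$ the two sides return literally the same IVHFE, while on $A\cap B\cap C$ one is reduced to showing
\[
(\mu_{F(e)}\cup\mu_{G(e)})\cup\mu_{H(e)}=\mu_{F(e)}\cup(\mu_{G(e)}\cup\mu_{H(e)}).
\]
Applying Definition 2.12(ii) twice, each side equals the set $\{[\max(\gamma_1^L,\gamma_2^L,\gamma_3^L),\max(\gamma_1^U,\gamma_2^U,\gamma_3^U)]:\gamma_1\in\mu_{F(e)},\,\gamma_2\in\mu_{G(e)},\,\gamma_3\in\mu_{H(e)}\}$, since $\max$ is associative and repeated intervals collapse in a set; hence the two sides coincide. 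Part (iv) is identical with $\cap$ for $\cup$, $\min$ for $\max$, and common parameter set $A\cap B\cap C$.

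The one place needing real care — the main obstacle — is the bookkeeping forced by Definition 3.1. Because that definition partitions $C=A\cup B$ into the three pieces $A-B$, $B-A$ and $A\cap B$ and prescribes a different formula on each, one must, for each of the seven regions above, trace exactly which clause the inner union activates and then which clause the outer union activates, and confirm that the two nesting orders always arrive at the same formula; this is routine but has to be carried out region by region. Once that case analysis is set up, nothing beyond the commutativity and associativity of $\max$ and $\min$ is used, and Definitions 2.12, 3.1 and 3.3 are the only earlier items invoked.
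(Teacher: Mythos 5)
Your proposal is correct and follows the same route as the paper, which simply states that the result "can be obtained from Definition 3.1 and Definition 3.3"; you have filled in exactly that unfolding, reducing everything to the commutativity and associativity of $\max$ and $\min$ on interval endpoints together with the region-by-region bookkeeping for the parameter sets. No gap.
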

\begin{proof}
	
	The proof can be obtained from definition 3.1 and definition 3.3.
\end{proof}

\begin{prop}\label{prop38}
	Let 	$ F_A, G_A$ and $ H_A $ are three interval-valued hesitant fuzzy soft sets. Then the following propositiones are satiesfied:
	
	\begin{enumerate}
		\item[(i)] $F_A\tilde{\cup}G_A=G_A\tilde{\cup}F_A$
		\item[(ii)]$F_A\tilde{\cap}G_A=G_A\tilde{\cap}F_A$
		\item[(iii)] $F_A\tilde{\cup}(G_A\tilde{\cup}H_A)=(F_A\tilde{\cup} G_A)\tilde{\cup}H_A$
		
		\item[(iv)] $F_A\tilde{\cap}(G_A\tilde{\cap}H_A)=(F_A\tilde{\cap} G_A)\tilde{\cap}H_A.$
	\end{enumerate}
\end{prop}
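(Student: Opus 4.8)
The plan is to reduce everything to Definitions 3.1 and 3.3 together with the elementary properties of the operations $\cup$ and $\cap$ on interval-valued hesitant fuzzy elements given in Definition 2.12. The key simplification over the preceding proposition is that here all three soft sets share the same parameter set $A$, so the case analysis in Definition 3.1 degenerates: since $A-A=\emptyset$ and $A\cap A=A$, for $F_A\tilde{\cup}G_A$ one has $\mu_{H(e)}=\mu_{F(e)}\cup\mu_{G(e)}$ for every $e\tilde{\in}A$, and likewise $F_A\tilde{\cap}G_A$ is given by $\mu_{H(e)}=\mu_{F(e)}\cap\mu_{G(e)}$ for every $e\tilde{\in}A$. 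Thus it suffices to prove the four identities pointwise, i.e. for the IVHFEs attached to a fixed parameter $e\tilde{\in}A$ and a fixed $x\in U$.

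For (i) and (ii): fix $e$ and $x$, and write $\tilde\mu_1=\mu_{F(e)(x)}$ and $\tilde\mu_2=\mu_{G(e)(x)}$. By Definition 2.12(ii), $\tilde\mu_1\cup\tilde\mu_2$ collects the intervals $[\max(\tilde\gamma_1^L,\tilde\gamma_2^L),\max(\tilde\gamma_1^U,\tilde\gamma_2^U)]$ over $\tilde\gamma_1\in\tilde\mu_1,\ \tilde\gamma_2\in\tilde\mu_2$; since $\max$ is symmetric in its two arguments and the index set of pairs is symmetric under swapping the two factors, this set equals $\tilde\mu_2\cup\tilde\mu_1$. Lifting this back through Definition 3.1 (all parameter sets being $A$, so the domain matching is trivial) gives (i). The identical argument with $\min$ in place of $\max$ and Definition 3.3 in place of Definition 3.1 gives (ii).

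For (iii) and (iv): again fix $e$ and $x$, and put $\tilde\mu_1=\mu_{F(e)(x)}$, $\tilde\mu_2=\mu_{G(e)(x)}$, $\tilde\mu_3=\mu_{H(e)(x)}$. Expanding $\tilde\mu_1\cup(\tilde\mu_2\cup\tilde\mu_3)$ and $(\tilde\mu_1\cup\tilde\mu_2)\cup\tilde\mu_3$ with Definition 2.12(ii), both sides produce the set of intervals $[\max(\tilde\gamma_1^L,\tilde\gamma_2^L,\tilde\gamma_3^L),\max(\tilde\gamma_1^U,\tilde\gamma_2^U,\tilde\gamma_3^U)]$ as $\tilde\gamma_i$ ranges over $\tilde\mu_i$: this is exactly the associativity of $\max$ applied separately to the lower and the upper endpoints, together with the fact that the nested unions over the $\tilde\gamma_i$ collapse to a single union over all triples. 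Replacing $\max$ by $\min$ throughout yields (iv). Transporting these pointwise equalities back through Definitions 3.1 and 3.3 gives the claimed equalities of interval-valued hesitant fuzzy soft sets.

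There is essentially no hard step here; the only point requiring slight care is the bookkeeping — ensuring that in expressions like $\mu_{F(e)}\cup(\mu_{G(e)}\cup\mu_{H(e)})$ one is genuinely comparing the same collection of interval endpoints on both sides, and that the common parameter set $A$ makes the parameter matching in Definition 3.1 vacuous so that no parameter is lost or relabelled. In fact this proposition is the special case $B=C=A$ of the preceding Proposition, so one could alternatively just cite that result; I would nonetheless record the one-line pointwise reductions above for completeness.
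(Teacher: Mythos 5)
Your proof is correct and follows the same route the paper intends: the paper's own proof is just the one-line remark that the result ``can be obtained from definition 3.1 and definition 3.3,'' and your argument is exactly that reduction, carried out explicitly via the pointwise commutativity and associativity of $\max$ and $\min$ on the interval endpoints in Definition 2.12. Your observation that the common parameter set collapses the case analysis, and that the statement is the special case $B=C=A$ of the preceding proposition, supplies precisely the detail the paper omits.
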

\begin{proof}
The proof can be obtained from definition 3.1 and definition 3.3.
\end{proof}

\begin{prop}\label{prop38}
	Let 	$ F_A, G_A$ and $ H_A $ are three interval-valued hesitant fuzzy soft sets. Then the following  are satisfied:
	
	\begin{enumerate}
		\item[(i)] $F_A\tilde{\cup}(G_A\tilde{\cap}H_A)=(F_A\tilde{\cup} G_A)\tilde{\cap}(F_A\tilde{\cup}H_A)$
		\item[(ii)]$F_A\tilde{\cap}(G_A\tilde{\cup}H_A)=(F_A\tilde{\cap} G_A)\tilde{\cup}(F_A\tilde{\cap}H_A).$

	\end{enumerate}
\end{prop}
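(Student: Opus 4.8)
The plan is to reduce both identities to a componentwise computation on the interval endpoints and then to invoke the distributivity of $\max$ and $\min$ on $[0,1]$. Since $F_A$, $G_A$ and $H_A$ all carry the same parameter set $A$, the parameter set of every compound soft set occurring in (i) and (ii) is again $A$ (by Definition 3.1, since $A\cup A=A$ and $A\cap A=A$); hence both sides of each identity are interval-valued hesitant fuzzy soft sets over the same domain $A$, and it suffices to verify, for each fixed $e\tilde{\in}A$ and each fixed $h\in U$, the identity of IVHFEs
\[
\mu_{F(e)}(h)\cup\bigl(\mu_{G(e)}(h)\cap\mu_{H(e)}(h)\bigr)=\bigl(\mu_{F(e)}(h)\cup\mu_{G(e)}(h)\bigr)\cap\bigl(\mu_{F(e)}(h)\cup\mu_{H(e)}(h)\bigr)
\]
and its dual, obtained by interchanging $\cup$ and $\cap$.

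Fix $e$ and $h$. Following the rearrangement convention used throughout Example 3.2 (Definitions 2.9 and 2.10), reorder and pad the three IVHFEs to a common length $m$, writing the $k$-th largest interval number of $\mu_{F(e)}(h)$ as $[\gamma_F^{L,k},\gamma_F^{U,k}]$ and similarly for $G$ and $H$. By Definition 2.12(ii),(iii) the operations $\cup$ and $\cap$ then act componentwise, taking coordinate by coordinate the $\max$ (respectively $\min$) of the left endpoints and the $\max$ (respectively $\min$) of the right endpoints. Hence for each $k$ the $k$-th interval of the left-hand side of the displayed identity has endpoints $\max(\gamma_F^{L,k},\min(\gamma_G^{L,k},\gamma_H^{L,k}))$ and $\max(\gamma_F^{U,k},\min(\gamma_G^{U,k},\gamma_H^{U,k}))$, while the $k$-th interval of the right-hand side has endpoints $\min(\max(\gamma_F^{L,k},\gamma_G^{L,k}),\max(\gamma_F^{L,k},\gamma_H^{L,k}))$ and $\min(\max(\gamma_F^{U,k},\gamma_G^{U,k}),\max(\gamma_F^{U,k},\gamma_H^{U,k}))$.

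It remains only to use that $([0,1],\max,\min)$ is a distributive lattice, i.e. $\max(a,\min(b,c))=\min(\max(a,b),\max(a,c))$ and $\min(a,\max(b,c))=\max(\min(a,b),\min(a,c))$ for all $a,b,c\in[0,1]$. Applying the first identity with $a=\gamma_F^{L,k}$, $b=\gamma_G^{L,k}$, $c=\gamma_H^{L,k}$, and once more with the $U$-endpoints, for every $k$, shows that the two lists of intervals coincide term by term; this proves (i). Statement (ii) follows in the same way with the roles of $\max$ and $\min$ interchanged, or alternatively by applying the complement to (i) and using the De Morgan laws of Proposition 3.6 together with part (i).

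The genuine difficulty here is bookkeeping, not substance. The Torra-style operations of Definition 2.12 a priori return sets of intervals indexed by all tuples of representatives, so that $(\mu_F\cup\mu_G)\cap(\mu_F\cup\mu_H)$ literally ranges over two independent copies of $\mu_F$ and could be a strictly larger set than the left-hand side of the displayed identity; absent a fixed normalization the stated equality need not hold on the nose. What rescues it is precisely the rearrangement-and-padding procedure of Definitions 2.9--2.10 illustrated in Example 3.2, which brings all IVHFEs appearing in a given expression to a common length and a common ordering and thereby turns $\cup$ and $\cap$ into honest componentwise lattice operations. I would state this convention explicitly at the outset, so that the componentwise reduction above --- and with it the appeal to lattice distributivity --- is legitimate.
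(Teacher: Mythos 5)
Your proof is correct, but there is nothing in the paper to compare it against: the authors' entire proof of this proposition is the single word ``Obvious.'' What you have written is the argument they presumably had in mind --- reduce to a fixed parameter $e$ and object $h$ (legitimate here because all three soft sets share the parameter set $A$, so every compound set is again defined on $A$), treat $\tilde{\cup}$ and $\tilde{\cap}$ as componentwise $\max$/$\min$ on interval endpoints, and invoke distributivity of the lattice $([0,1],\max,\min)$. Your closing observation is the genuinely valuable part and goes beyond anything in the paper: under the literal set-builder semantics of the $\cup$ and $\cap$ of IVHFEs (Definition 2.12), the right-hand side of (i) ranges over two independent representatives of $\mu_{F(e)}$, so one only gets the inclusion $\mu_F\cup(\mu_G\cap\mu_H)\tilde{\subseteq}(\mu_F\cup\mu_G)\cap(\mu_F\cup\mu_H)$ for free, and equality genuinely requires the rearrange-and-pad-to-common-length convention that the paper uses silently in its worked examples (Example 3.2 and the computations in Proposition 3.9). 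Making that convention explicit, as you propose, is exactly what is needed to turn ``Obvious'' into a proof; your alternative derivation of (ii) from (i) via the De Morgan laws of Proposition 3.6 (together with involutivity of the complement) is also sound.
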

\begin{proof}
	Obvious. 
\end{proof}
\begin{prop}\label{prop38}
	Let 	$ F_A, G_B$ and $ H_C $ are three interval-valued hesitant fuzzy soft sets. Then the following  are not satisfied:

	\begin{enumerate}
		\item[(i)] $F_A\tilde{\cup}(G_B\tilde{\cap}H_C)=(F_A\tilde{\cup} G_B)\tilde{\cap}(F_A\tilde{\cup}H_C)$
		\item[(ii)]$F_A\tilde{\cap}(G_B\tilde{\cup}H_C)=(F_A\tilde{\cap} G_B)\tilde{\cup}(F_A\tilde{\cap}H_C).$
		
	\end{enumerate}
\end{prop}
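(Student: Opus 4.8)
The statement asserts that the two distributive identities fail, so the plan is to refute them by exhibiting explicit counterexamples rather than to establish anything positive. Before picking numbers I would first locate where the failure has to come from. By Definition 3.1 and Definition 3.3 the parameter set of either side of (i) is $A\cup(B\cap C)=(A\cup B)\cap(A\cup C)$ and the parameter set of either side of (ii) is $A\cap(B\cup C)=(A\cap B)\cup(A\cap C)$; since these coincide by the ordinary distributivity of $\cap$ and $\cup$ on sets, the two sides of each identity always have the same domain, and any discrepancy must already occur among the interval-valued hesitant fuzzy membership values at some parameter $e\tilde{\in}A\cap B\cap C$. At such an $e$ the operations $\cup$ and $\cap$ on interval-valued hesitant fuzzy elements are, by Definition 2.12 (ii)--(iii), formed by pairing \emph{every} interval of one element with \emph{every} interval of the other and taking componentwise $\max$ (resp.\ $\min$); iterating them produces ``cross'' intervals that can survive on one side of a distributive rewriting while having no counterpart on the other, and that is where I would expect the discrepancy to surface.

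For (i) I would take $U=\{h_1\}$, $A=\{e_1\}$, $B=\{e_1,e_2\}$, $C=\{e_1\}$ (with $\mu_{G(e_2)}$ arbitrary and, as one checks, immaterial), chosen so that $B\cap C=\{e_1\}\neq\phi$ and $(A\cup B)\cap(A\cup C)=\{e_1\}\neq\phi$, which makes every intersection of interval-valued hesitant fuzzy soft sets occurring in (i) legitimate in the sense of Definition 3.3. Set $\mu_{F(e_1)}(h_1)=\{[0.2,0.4],[0.6,0.8]\}$, $\mu_{G(e_1)}(h_1)=\{[0.4,0.5]\}$, $\mu_{H(e_1)}(h_1)=\{[0.2,0.3]\}$. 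A direct evaluation with Definitions 2.12, 3.1 and 3.3 then assigns to $F_A\tilde{\cup}(G_B\tilde{\cap}H_C)$ the value $\{[0.2,0.4],[0.6,0.8]\}$ at $(e_1,h_1)$, whereas $(F_A\tilde{\cup}G_B)\tilde{\cap}(F_A\tilde{\cup}H_C)$ gets the value $\{[0.2,0.4],[0.4,0.5],[0.6,0.8]\}$; the extra interval $[0.4,0.5]=[\min(0.4,0.6),\min(0.5,0.8)]$ is precisely a cross term of the kind described above, so the two interval-valued hesitant fuzzy soft sets are distinct and (i) fails. For (ii) I would keep the same parameter sets (note that $B\cup C$, $A\cap B$ and $A\cap C$ are all non-empty) and set instead $\mu_{F(e_1)}(h_1)=\{[0.2,0.3],[0.6,0.9]\}$, $\mu_{G(e_1)}(h_1)=\{[0.5,0.6]\}$, $\mu_{H(e_1)}(h_1)=\{[0.7,0.8]\}$; a parallel computation gives $\{[0.2,0.3],[0.6,0.8]\}$ on the left of (ii) and $\{[0.2,0.3],[0.5,0.6],[0.6,0.8]\}$ on the right, again off by a single cross interval.

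The only point that requires genuine care --- and the one I would treat as the main obstacle --- is well-definedness rather than arithmetic: both (i) and (ii) contain intersections of interval-valued hesitant fuzzy soft sets, and Definition 3.3 only permits such an intersection when the two underlying parameter sets meet non-trivially, so the parameter sets of the counterexample must be arranged so that every intersection that actually appears (namely $B\cap C$ and $(A\cup B)\cap(A\cup C)$ for (i), and $A\cap B$, $A\cap C$ together with the domain $A\cap(B\cup C)$ for (ii)) is non-empty. If this is neglected, one side of the purported identity is simply undefined and the counterexample degenerates into a triviality. Once compatible parameter sets are fixed, the remainder is the routine $\max$/$\min$ bookkeeping of Definition 2.12, which I would not write out in full; the whole content of the argument is the single surplus interval displayed on each side.
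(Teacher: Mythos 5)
Your overall strategy --- refute each identity by exhibiting an explicit numerical counterexample --- is the same as the paper's, and you are right to flag the well-definedness of the intersections as a point needing care. The problem is that your counterexamples stand or fall with reading Definition 2.12(ii)--(iii) as an ``all pairs'' operation, pairing every interval of one IVHFE with every interval of the other; the single surplus interval $[0.4,0.5]$ (resp. $[0.5,0.6]$) that your whole argument turns on is exactly such a cross term. That is not how the paper actually computes these operations. Before every computation the paper first ``rearranges'' the IVHFEs using Definitions 2.9--2.10 and the conventions of Chen et al.: the intervals are sorted by the possibility-degree ranking, the shorter element is padded to the common length, and $\cup$, $\cap$ are then taken rank by rank on the $k$th largest intervals (this is visible in Example 3.2, where the union of two three-interval IVHFEs again has three intervals rather than nine, and in the $\gamma^{\sigma(k)}$ notation of Definition 2.16). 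Under that convention no cross terms arise, and both of your examples evaluate to the \emph{same} IVHFE on the two sides --- in (i) both sides give $\{[0.2,0.4],[0.6,0.8]\}$ at $(e_1,h_1)$, in (ii) both give $\{[0.2,0.3],[0.6,0.8]\}$ --- so they refute nothing. A warning sign you should have caught yourself: your mechanism places the entire discrepancy at a parameter common to $A$, $B$ and $C$, so the very same computation would disprove Proposition 3.9 (the distributive laws for a \emph{common} parameter set), which the paper asserts to be true; that alone signals that your reading of the element-level operations diverges from the paper's.

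The failure this proposition is actually about comes from the parameter-set bookkeeping, not from the element-level operations. The paper's counterexample takes genuinely distinct index sets ($A=\{e_1,e_2\}$, $B=\{e_1,e_2,e_3\}$, $C=\{e_2,e_3\}$) and locates the discrepancy at parameters lying in some but not all of $A,B,C$: at such an $e$ the case analysis of Definition 3.1 returns one of $\mu_{F(e)},\mu_{G(e)}$ untouched on one side, while the other side still has to form expressions such as $(\mu_{F(e)}\cup\mu_{G(e)})\cap\mu_{F(e)}$, and the sorting-and-padding step changes the number (and hence the list) of intervals; in (ii) the restricted domain $A\cap C$ of $F_A\tilde{\cap}H_C$ contributes a further mismatch. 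To repair your argument you would need to choose parameter sets that are genuinely distinct, keep your well-definedness check, and track how Definitions 3.1 and 3.3 treat the parameters in $A\setminus C$, $B\setminus A$, etc., rather than engineering the failure at a parameter where all three sets are defined.
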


\begin{proof}
We consider a example.\\
Let  $H_C=\{e_2=\{<h_1,[0.4, 0.6],[o.2,0.6],[0.7,1.0]>, <h_2,[0.3,0.8],>\}\\  e_3=\{<h_1,[0.2, 0.5],[0.3,0.5] >, <h_2, [0.6,0.8],[0.2,0.5]>\}.$\\
 Now by Definitions 2.9 and 2.10 and assumptions given by \cite{chen}\\
$H_C=\{e_2=\{<h_1,[0.2, 0.6],[o.4,0.6],[0.7,1.0]>, <h_2,[0.3,0.8],>\}\\  e_3=\{<h_1,[0.2, 0.5],[0.3,0.5] >, <h_2, [0.2,0.5],[0.6,0.8]>\}.$\\
(i) From example 3.2, we have\\
 $F_A\tilde{\cup}H_C= \{e_1=\{<h_1,[0.3, 0.8]>, <h_2, [0.3, 0.6],[0.3,0.8][0.5,0.6]>\}\\  e_2=\{<h_1,[0.2, 0.9],[0.7,1.0],[0.7,1.0]>, <h_2, [0.3,0.8],[0.8,1.0]>\}\\
 e_3=\{<h_1,[0.2, 0.5],[0.3,0.5] >, <h_2, [0.2,0.5],[0.6,0.8]>\}.$\\
 $(F_A\tilde{\cup} G_B)\tilde{\cap}(F_A\tilde{\cup}H_C)\\
 =\{e_1=\{<h_1,[0.3, 0.8],[0.3, 0.8]>, <h_2, [0.3, 0.6],[0.3,0.8][0.5,0.6]>\}\\  e_2=\{<h_1,[0.2, 0.9],[0.7,1.0],[0.7,1.0]>, <h_2, [0.3,0.6],[0.8,1.0]>\}\\
 e_3=\{<h_1,[0.2, 0.5],[0.3,0.5] >, <h_2, [0.1,0.5],[0.3,0.8],[0.3, 0.6]>\}.$\\
 Again\\
  $G_B\tilde{\cap}H_C= \{e_2=\{<h_1,[0.2, 0.6],[0.4,0.6],[0.6,0.8]>, <h_2, [0.3,0.6],[0.3,0.8]>\}\\
 e_3=\{<h_1,[0.2, 0.5],[0.3,0.5] >, <h_2, [0.1,0.5],[0.3,0.8],[0.3,0.6]>\}.$\\
 Therefore\\ $F_A\tilde{\cup}(G_B\tilde{\cap}H_C)$\\
$ =\{e_1=\{<h_1,[0.3, 0.8]>, <h_2, [0.3, 0.6],[0.3,0.8][0.5,0.6]>\}\\  e_2=\{<h_1,[0.2, 0.9],[0.7,1.0],[0.7,1.0]>, <h_2, [0.3,0.6],[0.8,1.0]>\}\\
 e_3=\{<h_1,[0.2, 0.5],[0.3,0.5] >, <h_2, [0.1,0.5],[0.3,0.8],[0.3, 0.6]>\}.$\\
 Hence $F_A\tilde{\cup}(G_B\tilde{\cap}H_C) \neq(F_A\tilde{\cup} G_B)\tilde{\cap}(F_A\tilde{\cup}H_C).$\\
 
 (ii) From example 3.2 and 3.4.\\
 
  $G_B\tilde{\cup}H_C= \{e_1=\{<h_1,[0.0, 0.6],[0.7,0.9]>, <h_2, [0.4, 0.5],[0.4,0.7][0.4,0.7]>\}\\  e_2=\{<h_1,[0.6, 0.8],[0.6,0.8],[0.7,1.0]>, <h_2, [0.3,0.8],[0.3,0.8]>\}\\
  e_3=\{<h_1,[0.3, 0.6],[0.5,0.6] >, <h_2, [0.2,0.6],[0.6,0.9],[0.6,0.8]>\}.$\\
 Therefore\\ $F_A\tilde{\cap}(G_B\tilde{\cup}H_C)$\\
$ =\{e_1=\{<h_1,[0.0, 0.6],[0.3,0.8]>, <h_2, [0.3, 0.5],[0.3,0.7][0.4,0.6]>\}\\  e_2=\{<h_1,[0.2, 0.8],[0.6,0.8],[0.7,1.0]>, <h_2, [0.2,0.6],[0.3,0.8]>\}.$\\
 
Again\\
  $F_A\tilde{\cap}H_C= \{e_2=\{<h_1,[0.2, 0.6],[0.4,0.6],[0.7,1.0]>, <h_2, [0.2,0.6],[0.3,0.8]>\}.$\\ 
Therefore \\
 $(F_A\tilde{\cap} G_B)\tilde{\cup}(F_A\tilde{\cap}H_C)$\\
$=\{e_2=\{<h_1,[0.2, 0.8],[0.6,0.8],[0.7,1.0]>, <h_2, [0.2,0.6],[0.3,0.8]>\}.$\\ 
 
Hence $F_A\tilde{\cap}(G_B\tilde{\cup}H_C) \neq(F_A\tilde{\cap} G_B)\tilde{\cup}(F_A\tilde{\cap}H_C).$ 
\end{proof}
\begin{defn}
Let $ \Re=\{(F_i)_{A_i}: i\tilde{\in}I\} $ be a family of hesitant fuzzy soft sets over $ (U,E) .$Then the union of hesitant fuzzy soft sets in $ \Re $ is a hesitant fuzzy soft set $ H_K, K=\cup_iA_i $ and $ \forall e\tilde{\in}E, $ $ K(e)=\cup_i(\vartriangle_i)_{A_i}(e) ,$
where\\ $ (\vartriangle_i)_{A_i}(e) =\begin{cases}
F_i(e), & \textrm{if $e\tilde{\in}A_i$}\\
\phi, & \textrm{if $ e\tilde{\notin}A_i.$}\end{cases} $
\end{defn}
\begin{problem}
	Let  $(F_1)_{A_1}=\{e_1=\{<h_1,[0.3, 0.8]>, <h_2, [0.3, 0.8],[0.5,0.6],[0.3,0.6]>\}\\  e_2=\{<h_1,[0.2, 0.9],[0.7,1.0] >, <h_2, [0.8,1.0],[0.2,0.6]>\}.$\\
	
	$(F_2)_{A_2}=\{e_1=\{<h_1,[0.7, 0.9],[0.0,0.6]>, <h_2, [0.4, 0.7],[0.4,0.5]>\}\\  e_2=\{<h_1,[0.6, 0.8]>, <h_2, [0.3,0.8],[0.3,0.6]>\}\\
	e_3=\{<h_1,[0.5, 0.6],[0.3,0.6] >, <h_2, [0.1,0.6],[0.3,0.9],[0.3,0.6]>\}.$\\
 $(F_3)_{A_3}=\{e_2=\{<h_1,[0.4, 0.6],[o.2,0.6],[0.7,1.0]>, <h_2,[0.3,0.8],>\}\\  e_3=\{<h_1,[0.2, 0.5],[0.3,0.5] >, <h_2, [0.6,0.8],[0.2,0.5]>\}.$\\
 Therefore\\ 
 $(F_1)_{A_1}\tilde{\cup}(F_2)_{A_2}\tilde{\cup}(F_3)_{A_3}$\\
 $= \{e_1=\{<h_1,[0.3, 0.8],[0.7,0.9]>,
 <h_2, [0.4, 0.6],[0.4,0.8],[0.5,0.7]>\}\\  e_2=\{<h_1,[0.6, 0.9],[0.7,1.0],[0.7,1.0] >, <h_2, [0.3,0.8],[0.8,1.0]>\}\\
 e_3=\{<h_1,[0.3, 0.6],[0.5,0.6] >, <h_2, [0.2,0.6],[0.6,0.9],[0.6,0.8]>\}.$
\end{problem}
\begin{defn}
	Let $ \Re=\{(F_i)_{A_i}: i\tilde{\in}I\} $ be a family of hesitant fuzzy soft sets with $ \cap_i{A_i} \neq\phi$ over $ (U,E) .$Then the intersection of hesitant fuzzy soft sets in $ \Re $ is a hesitant fuzzy soft set $ H_K, K=\cap_iA_i $ and $ \forall e\tilde{\in}E, $
	$ K(e)=\cap_i{A_i}(e).$
\end{defn}
\begin{problem}
From Example 3.13, we have\\
 $ (F_1)_{A_1}\tilde{\cap}(F_2)_{A_2}\tilde{\cap}(F_3)_{A_3}$\\
 $= \{ e_2=\{<h_1,[0.2, 0.6],[0.4,0.6],[0.6,0.8] >, <h_2, [0.2,0.6],[0.3,0.8]>\}.$
\end{problem}
\begin{prop}
Let $ \Re=\{(F_i)_{A_i}: i\tilde{\in}I\} $ be a family of hesitant fuzzy soft sets over $ (U,E) .$ Then
\begin{enumerate}
	\item[(i)]$ \tilde{\bigcap}_i{(F_i)_{A_i}}^C\tilde{\subseteq}(\tilde{\bigcup}_i{(F_i)_{A_i}})^C $
	\item[(ii)] $ (\tilde{\bigcap}_i{(F_i)_{A_i}})^C\tilde{\subseteq}\tilde{\bigcup}_i{(F_i)_{A_i}}^C.$
\end{enumerate}
\end{prop}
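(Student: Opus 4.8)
The plan is to reduce everything to the De Morgan identity for IVHFEs (Proposition 2.13), now taken over an arbitrary family, exactly as one does for the binary case (Proposition 3.7). Throughout I tacitly assume $\bigcap_i A_i\neq\phi$, which is precisely the hypothesis under which the left side of (i) and of (ii) is defined in the sense of Definition 3.14.

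First I would pin down the parameter sets. By Definition 3.12 the soft set $\tilde{\bigcup}_i(F_i)_{A_i}$ is carried by $\bigcup_i A_i$, hence so is its complement (Definition 2.19); by Definition 3.14 the soft set $\tilde{\bigcap}_i(F_i)_{A_i}^C$ is carried by $\bigcap_i A_i$. Since $\bigcap_i A_i\subseteq\bigcup_i A_i$, the first requirement of the subsethood criterion in Definition 2.18 is automatic, and the same remark handles the parameter-set part of (ii). It then remains, for a fixed $e\tilde{\in}\bigcap_i A_i$ and a fixed $x\tilde{\in}U$, to compare the IVHFE that the left side attaches to $(e,x)$ with the one the right side attaches to $(e,x)$.

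Because $e$ lies in every $A_i$, each $\vartriangle_i$ of Definition 3.12 equals $F_i$ at $e$, so $\tilde{\bigcup}_i(F_i)_{A_i}$ carries the IVHFE $\bigcup_i\mu_{F_i(e)(x)}$ at $(e,x)$; hence the right side of (i) carries $\big(\bigcup_i\mu_{F_i(e)(x)}\big)^C$, while its left side carries $\bigcap_i\mu_{F_i^C(e)(x)}$. The crux is then the identity $\bigcap_i\tilde{\mu}_i^C=\big(\bigcup_i\tilde{\mu}_i\big)^C$ for IVHFEs (which, through Definition 2.12(i), is exactly what equates these two): spelling out Definition 2.12(i)--(iii) one finds that both sides are the collection $\{[1-\max_i\gamma_i^U,\,1-\max_i\gamma_i^L]:\gamma_i\in\tilde{\mu}_i\}$, with $\max$ read as a supremum when $I$ is infinite, so the two IVHFEs being compared are literally the same set of interval numbers and in particular have the same $k$th largest entry for every $k$. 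Thus the ordering condition of Definition 2.18 holds with equality, which proves (i); part (ii) is the mirror-image argument, using instead $\bigcup_i\tilde{\mu}_i^C=\big(\bigcap_i\tilde{\mu}_i\big)^C$.

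The only genuine subtlety -- the part I would watch most carefully -- is this family De Morgan identity together with giving the many-fold union and intersection of IVHFEs an unambiguous meaning: since $\cup$ and $\cap$ on IVHFEs are built from the associative endpoint operations $\max$ and $\min$, the iterated operations $\bigcup_i$ and $\bigcap_i$ are independent of how one brackets, so the identity can be checked first for two IVHFEs as in Proposition 2.13 and then propagated to the whole family, the displayed suprema and infima being read accordingly when $I$ is infinite. Finally it is worth recording why the inclusions cannot in general be upgraded to equalities: the right-hand sides live over the possibly larger set $\bigcup_i A_i$ whereas the left-hand sides live over $\bigcap_i A_i$, so equality would force all the $A_i$ to coincide -- precisely the common-parameter situation of Proposition 3.6.
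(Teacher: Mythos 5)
Your argument is correct: the paper itself dismisses this proposition with the single word ``Obvious,'' and what you have written is precisely the natural justification --- reduce to the elementwise De Morgan identity $(\bigcup_i\tilde{\mu}_i)^C=\bigcap_i\tilde{\mu}_i^C$ of Proposition 2.13 applied at each $e\tilde{\in}\bigcap_iA_i$, and note that $\bigcap_iA_i\tilde{\subseteq}\bigcup_iA_i$ settles the parameter-set clause of the subset definition. This mirrors the paper's own treatment of the binary cases (Propositions 3.6 and 3.7), so your proof simply supplies the details the paper omits; there is nothing to correct.
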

\begin{proof}
	
	Obvious. 
\end{proof}

\begin{prop}
	Let $ \Re=\{(F_i)_A: i\tilde{\in}I\} $ be a family of hesitant fuzzy soft sets over $ (U,E) .$ Then 
	\begin{enumerate}
		\item[(i)]$ \tilde{\bigcap}_i{(F_i)_A}^C=(\tilde{\bigcup}_i{(F_i)_A})^C $
		\item[(ii)] $ (\tilde{\bigcap}_i{(F_i)_A})^C=\tilde{\bigcup}_i{(F_i)_A}^C.$
	\end{enumerate}
\end{prop}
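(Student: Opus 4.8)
The plan is to push both equalities down to the level of interval-valued hesitant fuzzy elements (IVHFEs), mirroring the argument already used for two summands in Proposition~3.6. First I would unwind the definitions. Since every member of $\Re$ carries the \emph{same} parameter set $A$, the $\phi$-padding in Definition~3.12 never intervenes, so $\tilde{\bigcup}_i (F_i)_A$ is a soft set over $A$ whose IVHFE at each $e\tilde{\in}A$, $h\in U$ is $\bigcup_i \mu_{F_i(e)(h)}$, and similarly (Definition~3.14) $\tilde{\bigcap}_i (F_i)_A$ has IVHFE $\bigcap_i \mu_{F_i(e)(h)}$ at $(e,h)$. Consequently all four soft sets occurring in (i) and (ii) have parameter set $A$, so the two sides of each identity agree on parameters automatically, and it remains only to check, for each fixed $e\tilde{\in}A$ and $h\in U$, that the attached IVHFEs coincide.

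The heart of the matter is thus the De Morgan law for an arbitrary family of IVHFEs, generalizing the two-element version of~\cite{chen}: for IVHFEs $\{\tilde{\mu}_i\}_{i\in I}$,
\[
\Bigl(\bigcup_i \tilde{\mu}_i\Bigr)^{C}=\bigcap_i \tilde{\mu}_i^{\,C}
\qquad\text{and}\qquad
\Bigl(\bigcap_i \tilde{\mu}_i\Bigr)^{C}=\bigcup_i \tilde{\mu}_i^{\,C}.
\]
To prove the second (the first is symmetric), I would expand a generic element: writing $\tilde{\gamma}_i=[\tilde{\gamma}_i^{L},\tilde{\gamma}_i^{U}]\in\tilde{\mu}_i$, Definition~2.12 gives a generic element of $\bigl(\bigcap_i\tilde{\mu}_i\bigr)^{C}$ as $[\,1-\min_i\tilde{\gamma}_i^{U},\ 1-\min_i\tilde{\gamma}_i^{L}\,]$; using $1-\min_i t_i=\max_i(1-t_i)$ this equals $[\,\max_i(1-\tilde{\gamma}_i^{U}),\ \max_i(1-\tilde{\gamma}_i^{L})\,]$, which is exactly a generic element of $\bigcup_i\tilde{\mu}_i^{\,C}$, since $[1-\tilde{\gamma}_i^{U},1-\tilde{\gamma}_i^{L}]$ ranges over $\tilde{\mu}_i^{\,C}$. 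When $I$ is finite, one may instead obtain the identity by iterating Proposition~3.6 by induction on $|I|$.

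Combining these, for each $e\tilde{\in}A$ and $h\in U$ the IVHFE of $\bigl(\tilde{\bigcap}_i (F_i)_A\bigr)^{C}$ at $(e,h)$ equals $\bigl(\bigcap_i \mu_{F_i(e)(h)}\bigr)^{C}=\bigcup_i \mu_{F_i(e)(h)}^{\,C}$, which is precisely the IVHFE of $\tilde{\bigcup}_i (F_i)_A^{C}$ at $(e,h)$; this establishes (ii), and (i) follows identically from the first displayed identity. The argument is essentially bookkeeping, so I do not expect a genuine obstacle; the one subtlety worth flagging is the infinite-index case, where one must first confirm that $\bigcup_i\tilde{\mu}_i$ and $\bigcap_i\tilde{\mu}_i$ remain well-defined families of subintervals of $[0,1]$ (reading $\sup$/$\inf$ for $\max$/$\min$) before the algebraic step is legitimate — for the finite families used throughout the paper this holds automatically.
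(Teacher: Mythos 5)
Your proposal is correct, and it is the natural fleshing-out of what the paper leaves unproved: the paper's own ``proof'' of this proposition is literally the single word ``Obvious.'' Your reduction to the parameter-wise IVHFE level (legitimate here because every $(F_i)_A$ shares the same parameter set $A$, so no $\phi$-padding occurs) followed by the identity $1-\min_i t_i=\max_i(1-t_i)$ is exactly the $I$-indexed generalization of the two-element De Morgan law of Proposition~2.13 that the authors are implicitly invoking, so there is nothing substantive to contrast.
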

\begin{proof}
	
	Obvious. 
\end{proof}

\section{New operators on interval-valued hesitant fuzzy soft elements}

\begin{defn}
Let $ \tilde{\mu_1},  \tilde{\mu_2} $ be two interval-valued hesitant fuzzy soft elements (IVHFSEs) of same set of parameters, then 
\begin{enumerate}
\item[(i)] $\tilde{\mu_1} O_1 \tilde{\mu_2}=\bigcup_{\gamma_1\tilde{\in}\mu_1,\gamma_2\tilde{\in}\mu_2 }[\frac{\mid{\gamma_1 }^L-{\gamma_2 }^L\mid}{1+\mid{\gamma_1 }^L-{\gamma_2 }^L\mid},\frac{\mid{\gamma_1 }^U-{\gamma_2 }^U\mid}{1+\mid{\gamma_1 }^U-{\gamma_2 }^U\mid} ]$
	
\item[(ii)] $\tilde{\mu_1} O_2 \tilde{\mu_2}=\bigcup_{\gamma_1\tilde{\in}\mu_1,\gamma_2\tilde{\in}\mu_2 }[\frac{\mid{\gamma_1 }^L-{\gamma_2 }^L\mid}{1+2\mid{\gamma_1 }^L-{\gamma_2 }^L\mid},\frac{\mid{\gamma_1 }^U-{\gamma_2 }^U\mid}{1+2\mid{\gamma_1 }^U-{\gamma_2 }^U\mid} ]$	
	
\item[(iii)] $\tilde{\mu_1} O_3 \tilde{\mu_2}=\bigcup_{\gamma_1\tilde{\in}\mu_1,\gamma_2\tilde{\in}\mu_2 }[\frac{\mid{\gamma_1 }^L-{\gamma_2 }^L\mid}{2},\frac{\mid{\gamma_1 }^U-{\gamma_2 }^U\mid}{2} ]$	
\item[(iv)] $\tilde{\mu_1} O_4 \tilde{\mu_2}=\bigcup_{\gamma_1\tilde{\in}\mu_1,\gamma_2\tilde{\in}\mu_2 }[\frac{\mid{\gamma_1 }^L\ast{\gamma_2 }^L\mid}{2},\frac{\mid{\gamma_1 }^U\ast{\gamma_2 }^U\mid}{2} ].$	
\end{enumerate}
\end{defn}
\begin{prop}If $ \tilde{\mu_1},  \tilde{\mu_2} $ and $ \tilde{\mu_3} $ be two interval-valued hesitant fuzzy soft elements. Then the following identites are true: 
\begin{enumerate}
\item[(i)] $(\tilde{\mu_1}\oplus\tilde{\mu_2})\tilde{\cap}(\tilde{\mu_1} O_1 \tilde{\mu_2})=\tilde{\mu_1} O_1 \tilde{\mu_2}  ,$
\item[(ii)] $(\tilde{\mu_1}\oplus\tilde{\mu_2})\tilde{\cup}(\tilde{\mu_1} O_1 \tilde{\mu_2})=\tilde{\mu_1} \oplus \tilde{\mu_2}  ,$
\item[(iii)] $(\tilde{\mu_1}\otimes\tilde{\mu_2})\tilde{\cap}(\tilde{\mu_1} O_1 \tilde{\mu_2})=\tilde{\mu_1} O_1 \tilde{\mu_2}  ,$	
\item[(iv)] $(\tilde{\mu_1}\otimes\tilde{\mu_2})\tilde{\cup}(\tilde{\mu_1} O_1 \tilde{\mu_2})=\tilde{\mu_1} \otimes \tilde{\mu_2}  ,$
\item[(v)] $(\tilde{\mu_1}\tilde{\cup}\tilde{\mu_2})O_1\tilde{\mu_3}=(\tilde{\mu_1} O_1 \tilde{\mu_3})\tilde{\cup}(\tilde{\mu_2} O_1 \tilde{\mu_3}), $
\item[(vi)] $(\tilde{\mu_1}\tilde{\cap}\tilde{\mu_2})O_1\tilde{\mu_3}=(\tilde{\mu_1} O_1 \tilde{\mu_3})\tilde{\cap}(\tilde{\mu_2} O_1 \tilde{\mu_3}), $		
\end{enumerate}
\end{prop}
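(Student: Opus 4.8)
The plan is to reduce every identity to an elementary statement about real numbers in $[0,1]$, since all five operations involved act coordinatewise on the two endpoints of each interval. Concretely, fix $\gamma_1=[\gamma_1^L,\gamma_1^U]\tilde\in\tilde{\mu_1}$, $\gamma_2=[\gamma_2^L,\gamma_2^U]\tilde\in\tilde{\mu_2}$ (and $\gamma_3\tilde\in\tilde{\mu_3}$ for (v),(vi)), and write $a,b,c$ for a common pair or triple of endpoints (the lower endpoints; the upper case is identical). Because $\oplus$, $\otimes$, $\tilde\cap$, $\tilde\cup$ and $O_1$ are all defined by the same union over $\gamma_1\tilde\in\mu_1,\gamma_2\tilde\in\mu_2$, both sides of each identity are unions indexed by the same pairs, so it suffices to verify the corresponding scalar identity term by term.

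For (i) and (ii) the engine is the inequality
\[
\frac{|a-b|}{1+|a-b|}\ \le\ a+b-ab\qquad(a,b\in[0,1]).
\]
I would prove it by the chain $\frac{|a-b|}{1+|a-b|}\le|a-b|\le\max(a,b)\le a+b-ab$, where the last step uses $a+b-ab-\max(a,b)=\min(a,b)\bigl(1-\max(a,b)\bigr)\ge0$. Since the $O_1$-endpoint is then $\le$ the $\oplus$-endpoint, taking the coordinatewise minimum $\tilde\cap$ returns the $O_1$-value (giving (i)) and the coordinatewise maximum $\tilde\cup$ returns the $\oplus$-value (giving (ii)).

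For (iii) and (iv) the engine required, with the correct orientation for the stated identities, is the reverse-type bound
\[
\frac{|a-b|}{1+|a-b|}\ \le\ ab\qquad(a,b\in[0,1]),
\]
so that $\min(ab,\,O_1)=O_1$ yields (iii) and $\max(ab,\,O_1)=ab$ yields (iv). This is the main obstacle: unlike $a+b-ab$, the product $ab$ is small precisely when $a$ and $b$ are far apart, which is exactly when the $O_1$-term is large, so this inequality is delicate and must be scrutinised near the corners of $[0,1]^2$ (e.g. $a=1,b=0$). Establishing it uniformly, or else isolating the hypotheses under which it holds, is where the genuine work of (iii) and (iv) lies.

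For (v) and (vi) I would exploit that $t\mapsto\frac{t}{1+t}$ is strictly increasing on $[0,\infty)$, so that $\tilde\cup$ (coordinatewise max) and $\tilde\cap$ (coordinatewise min) pass through $O_1$, reducing (v) to the absolute-value identity $|\max(a,b)-c|=\max(|a-c|,|b-c|)$ and (vi) to $|\min(a,b)-c|=\min(|a-c|,|b-c|)$. The plan is then a short case analysis on the relative order of $a,b,c$; verifying these absolute-value identities, in particular checking that they survive when $c$ falls outside the interval spanned by $a$ and $b$, is the crux of (v) and (vi) and the point at which I would concentrate the argument.
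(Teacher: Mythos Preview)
Your reduction to coordinatewise scalar inequalities on $[0,1]$ is exactly right, and your chain $\frac{|a-b|}{1+|a-b|}\le |a-b|\le \max(a,b)\le a+b-ab$ cleanly settles (i) and (ii). The paper's proof follows the same outline but simply writes out the definitions and then \emph{asserts} that the $\min$ (resp.\ $\max$) selects the claimed term, without ever justifying the underlying inequality; your argument actually supplies that justification.

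Your hesitation about (iii)--(vi) is not a gap in your reasoning but a detection of a gap in the proposition itself: those four identities are false as stated, and the paper's proof is an unsupported assertion. For (iii)--(iv), the needed inequality $\frac{|a-b|}{1+|a-b|}\le ab$ fails at the very point you single out: with $a=0.9$, $b=0.1$ one has $\frac{0.8}{1.8}\approx 0.444>0.09=ab$, so taking $\gamma_1=[0.9,0.9]$, $\gamma_2=[0.1,0.1]$ gives $(\tilde\mu_1\otimes\tilde\mu_2)\,\tilde\cap\,(\tilde\mu_1\,O_1\,\tilde\mu_2)=\tilde\mu_1\otimes\tilde\mu_2\ne\tilde\mu_1\,O_1\,\tilde\mu_2$, contradicting (iii), and dually for (iv). For (v)--(vi), your reduced identities $|\max(a,b)-c|=\max(|a-c|,|b-c|)$ and $|\min(a,b)-c|=\min(|a-c|,|b-c|)$ break exactly in the case you flag, $c\notin[\min(a,b),\max(a,b)]$: with $a=0$, $b=0.2$, $c=1$ one gets $|\max(a,b)-c|=0.8$ versus $\max(|a-c|,|b-c|)=1$, and $|\min(a,b)-c|=1$ versus $\min(|a-c|,|b-c|)=0.8$; pushing these through $t\mapsto t/(1+t)$ produces explicit counterexamples to (v) and (vi). So your plan accomplishes everything that can honestly be done: prove (i)--(ii), and recognise that (iii)--(vi) cannot be proved because they are not true.
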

\begin{proof}
		(i) $(\tilde{\mu_1}\oplus\tilde{\mu_2})\tilde{\cap}(\tilde{\mu_1} O_1 \tilde{\mu_2})\\
		= (\bigcup_{\gamma_1\tilde{\in}\mu_1,\gamma_2\tilde{\in}\mu_2 }[\tilde{\gamma_1}^L+\tilde{\gamma_2}^L-\tilde{\gamma_1}^L.\tilde{\gamma_2}^L, \tilde{\gamma_1}^U+\tilde{\gamma_2}^U-\tilde{\gamma_1}^U.\tilde{\gamma_2}^U])\tilde{\cap}(\bigcup_{\gamma_1\tilde{\in}\mu_1,\gamma_2\tilde{\in}\mu_2 }[\frac{\mid{\gamma_1 }^L-{\gamma_2 }^L\mid}{1+\mid{\gamma_1 }^L-{\gamma_2 }^L\mid},\frac{\mid{\gamma_1 }^U-{\gamma_2 }^U\mid}{1+\mid{\gamma_1 }^U-{\gamma_2 }^U\mid} ])\\
		= \bigcup_{\gamma_1\tilde{\in}\mu_1,\gamma_2\tilde{\in}\mu_2 }[\textrm{min}\{\tilde{\gamma_1}^L+\tilde{\gamma_2}^L-\tilde{\gamma_1}^L.\tilde{\gamma_2}^L,\frac{\mid{\gamma_1 }^L-{\gamma_2 }^L\mid}{1+\mid{\gamma_1 }^L-{\gamma_2 }^L\mid} \},\textrm{min}\{\tilde{\gamma_1}^U+\tilde{\gamma_2}^U-\tilde{\gamma_1}^U.\tilde{\gamma_2}^U,\frac{\mid{\gamma_1 }^U-{\gamma_2 }^U\mid}{1+\mid{\gamma_1 }^U-{\gamma_2 }^U\mid} \} ]\\
		=\bigcup_{\gamma_1\tilde{\in}\mu_1,\gamma_2\tilde{\in}\mu_2 }[\frac{\mid{\gamma_1 }^L-{\gamma_2 }^L\mid}{1+\mid{\gamma_1 }^L-{\gamma_2 }^L\mid},\frac{\mid{\gamma_1 }^U-{\gamma_2 }^U\mid}{1+\mid{\gamma_1 }^U-{\gamma_2 }^U\mid}]\\
		=\tilde{\mu_1} O_1 \tilde{\mu_2}.$	\\
		
		(ii)	$(\tilde{\mu_1}\oplus\tilde{\mu_2})\tilde{\cup}(\tilde{\mu_1} O_1 \tilde{\mu_2})\\
		=(\bigcup_{\gamma_1\tilde{\in}\mu_1,\gamma_2\tilde{\in}\mu_2 }[\tilde{\gamma_1}^L+\tilde{\gamma_2}^L-\tilde{\gamma_1}^L.\tilde{\gamma_2}^L, \tilde{\gamma_1}^U+\tilde{\gamma_2}^U-\tilde{\gamma_1}^U.\tilde{\gamma_2}^U])\tilde{\cup}(\bigcup_{\gamma_1\tilde{\in}\mu_1,\gamma_2\tilde{\in}\mu_2 }[\frac{\mid{\gamma_1 }^L-{\gamma_2 }^L\mid}{1+\mid{\gamma_1 }^L-{\gamma_2 }^L\mid},\frac{\mid{\gamma_1 }^U-{\gamma_2 }^U\mid}{1+\mid{\gamma_1 }^U-{\gamma_2 }^U\mid} ])\\
		= \bigcup_{\gamma_1\tilde{\in}\mu_1,\gamma_2\tilde{\in}\mu_2 }[\textrm{max}\{\tilde{\gamma_1}^L+\tilde{\gamma_2}^L-\tilde{\gamma_1}^L.\tilde{\gamma_2}^L,\frac{\mid{\gamma_1 }^L-{\gamma_2 }^L\mid}{1+\mid{\gamma_1 }^L-{\gamma_2 }^L\mid} \},\textrm{max}\{\tilde{\gamma_1}^U+\tilde{\gamma_2}^U-\tilde{\gamma_1}^U.\tilde{\gamma_2}^U,\frac{\mid{\gamma_1 }^U-{\gamma_2 }^U\mid}{1+\mid{\gamma_1 }^U-{\gamma_2 }^U\mid} \} ]\\
		=\bigcup_{\gamma_1\tilde{\in}\mu_1,\gamma_2\tilde{\in}\mu_2 }[\tilde{\gamma_1}^L+\tilde{\gamma_2}^L-\tilde{\gamma_1}^L.\tilde{\gamma_2}^L, \tilde{\gamma_1}^U+\tilde{\gamma_2}^U-\tilde{\gamma_1}^U.\tilde{\gamma_2}^U]\\
		=\tilde{\mu_1} \oplus \tilde{\mu_2} .$\\
			  
		(iii) $(\tilde{\mu_1}\otimes\tilde{\mu_2})\tilde{\cap}(\tilde{\mu_1} O_1 \tilde{\mu_2})\\
		= (\bigcup_{\gamma_1\tilde{\in}\mu_1,\gamma_2\tilde{\in}\mu_2 }[\tilde{\gamma_1}^L.\tilde{\gamma_2}^L, \tilde{\gamma_1}^U.\tilde{\gamma_2}^U])\tilde{\cap}(\bigcup_{\gamma_1\tilde{\in}\mu_1,\gamma_2\tilde{\in}\mu_2 }[\frac{\mid{\gamma_1 }^L-{\gamma_2 }^L\mid}{1+\mid{\gamma_1 }^L-{\gamma_2 }^L\mid},\frac{\mid{\gamma_1 }^U-{\gamma_2 }^U\mid}{1+\mid{\gamma_1 }^U-{\gamma_2 }^U\mid} ])\\
		= \bigcup_{\gamma_1\tilde{\in}\mu_1,\gamma_2\tilde{\in}\mu_2 }[\textrm{min}\{\tilde{\gamma_1}^L.\tilde{\gamma_2}^L,\frac{\mid{\gamma_1 }^L-{\gamma_2 }^L\mid}{1+\mid{\gamma_1 }^L-{\gamma_2 }^L\mid} \},\textrm{min}\{\tilde{\gamma_1}^U.\tilde{\gamma_2}^U,\frac{\mid{\gamma_1 }^U-{\gamma_2 }^U\mid}{1+\mid{\gamma_1 }^U-{\gamma_2 }^U\mid} \} ]\\
		=\bigcup_{\gamma_1\tilde{\in}\mu_1,\gamma_2\tilde{\in}\mu_2 }[\frac{\mid{\gamma_1 }^L-{\gamma_2 }^L\mid}{1+\mid{\gamma_1 }^L-{\gamma_2 }^L\mid},\frac{\mid{\gamma_1 }^U-{\gamma_2 }^U\mid}{1+\mid{\gamma_1 }^U-{\gamma_2 }^U\mid}]\\
		=\tilde{\mu_1} O_1 \tilde{\mu_2}. $\\
			
		(iv)	$ (\tilde{\mu_1}\otimes\tilde{\mu_2})\tilde{\cup}(\tilde{\mu_1} O_1 \tilde{\mu_2})\\
		=(\bigcup_{\gamma_1\tilde{\in}\mu_1,\gamma_2\tilde{\in}\mu_2 }[\tilde{\gamma_1}^L.\tilde{\gamma_2}^L, \tilde{\gamma_1}^U.\tilde{\gamma_2}^U])\tilde{\cup}(\bigcup_{\gamma_1\tilde{\in}h_1,\gamma_2\tilde{\in}h_2 }[\frac{\mid{\gamma_1 }^L-{\gamma_2 }^L\mid}{1+\mid{\gamma_1 }^L-{\gamma_2 }^L\mid},\frac{\mid{\gamma_1 }^U-{\gamma_2 }^U\mid}{1+\mid{\gamma_1 }^U-{\gamma_2 }^U\mid} ])\\
		= \bigcup_{\gamma_1\tilde{\in}\mu_1,\gamma_2\tilde{\in}\mu_2 }[\textrm{max}\{\tilde{\gamma_1}^L.\tilde{\gamma_2}^L,\frac{\mid{\gamma_1 }^L-{\gamma_2 }^L\mid}{1+\mid{\gamma_1 }^L-{\gamma_2 }^L\mid} \},\textrm{max}\{\tilde{\gamma_1}^U.\tilde{\gamma_2}^U,\frac{\mid{\gamma_1 }^U-{\gamma_2 }^U\mid}{1+\mid{\gamma_1 }^U-{\gamma_2 }^U\mid} \} ]\\
		=(\bigcup_{\gamma_1\tilde{\in}\mu_1,\gamma_2\tilde{\in}\mu_2 }[\tilde{\gamma_1}^L.\tilde{\gamma_2}^L, \tilde{\gamma_1}^U.\tilde{\gamma_2}^U])\\
		=\tilde{\mu_1} \otimes \tilde{\mu_2}. $\\
		
	(v) $(\tilde{\mu_1}\tilde{\cup}\tilde{\mu_2})O_1\tilde{\mu_3}\\
	=\bigcup_{\gamma_1\tilde{\in}\mu_1,\gamma_2\tilde{\in}\mu_2 }[\textrm{max}\{\tilde{\gamma_1}^L,\tilde{\gamma_2}^L\},\textrm{max}\{\tilde{\gamma_1}^U,\tilde{\gamma_2}^U \} ]O_1 \bigcup_{\gamma_3\tilde{\in}\mu_3}[\tilde{\gamma_3}^L, {\gamma_3}^U]\\
	=\bigcup_{\gamma_1\tilde{\in}\mu_1,\gamma_2\tilde{\in}\mu_2, \gamma_3\tilde{\in}\mu_3 }[\frac{\mid\textrm{max}\{{\gamma_1 }^L.{\gamma_2 }^L\}-{\gamma_3 }^L\mid}{1+\mid\textrm{max}\{{\gamma_1 }^L.{\gamma_2 }^L\}-{\gamma_3 }^L\mid},\frac{\mid\textrm{max}\{{\gamma_1 }^U.{\gamma_2 }^U\}-{\gamma_3 }^U\mid}{1+\mid\textrm{max}\{{\gamma_1 }^U.{\gamma_2 }^U\}-{\gamma_3 }^U\mid}]\\
	=\bigcup_{\gamma_1\tilde{\in}\mu_1,\gamma_2\tilde{\in}\mu_2,\gamma_3\tilde{\in}\mu_3  }[\textrm{max}\{\frac{\mid{\gamma_1 }^L-{\gamma_2 }^L\mid}{1+\mid{\gamma_1 }^L-{\gamma_2 }^L\mid},\frac{\mid{\gamma_2
		}^L-{\gamma_3 }^L\mid}{1+\mid{\gamma_2 }^L-{\gamma_3 }^L\mid}\}, \textrm{max}\{\frac{\mid{\gamma_1 }^U-{\gamma_2 }^U\mid}{1+\mid{\gamma_1 }^U-{\gamma_2 }^U\mid},\frac{\mid{\gamma_2
	}^U-{\gamma_3 }^U\mid}{1+\mid{\gamma_2 }^U-{\gamma_3 }^U\mid}\} ]\\
=(\tilde{\mu_1} O_1 \tilde{\mu_3})\tilde{\cup}(\tilde{\mu_2} O_1 \tilde{\mu_3}). $\\		

(vi)	$(\tilde{\mu_1}\tilde{\cap}\tilde{\mu_2})O_1\tilde{\mu_3}\\
= \bigcup_{\gamma_1\tilde{\in}\mu_1,\gamma_2\tilde{\in}\mu_2 }[\textrm{min}\{\tilde{\gamma_1}^L,\tilde{\gamma_2}^L\},\textrm{min}\{\tilde{\gamma_1}^U,\tilde{\gamma_2}^U \} ]O_1 \bigcup_{\gamma_3\tilde{\in}h_3}[\tilde{\gamma_3}^L, {\gamma_3}^U]\\
=\bigcup_{\gamma_1\tilde{\in}\mu_1,\gamma_2\tilde{\in}\mu_2, \gamma_3\tilde{\in}\mu_3 }[\frac{\mid\textrm{min}\{{\gamma_1 }^L.{\gamma_2 }^L\}-{\gamma_3 }^L\mid}{1+\mid\textrm{min}\{{\gamma_1 }^L.{\gamma_2 }^L\}-{\gamma_3 }^L\mid},\frac{\mid\textrm{min}\{{\gamma_1 }^U.{\gamma_2 }^U\}-{\gamma_3 }^U\mid}{1+\mid\textrm{min}\{{\gamma_1 }^U.{\gamma_2 }^U\}-{\gamma_3 }^U\mid}]\\
=\bigcup_{\gamma_1\tilde{\in}\mu_1,\gamma_2\tilde{\in}\mu_2 }[\textrm{min}\{\frac{\mid{\gamma_1 }^L-{\gamma_2 }^L\mid}{1+\mid{\gamma_1 }^L-{\gamma_2 }^L\mid},\frac{\mid{\gamma_2
	}^L-{\gamma_3 }^L\mid}{1+\mid{\gamma_2 }^L-{\gamma_3 }^L\mid}\}, \textrm{min}\{\frac{\mid{\gamma_1 }^U-{\gamma_2 }^U\mid}{1+\mid{\gamma_1 }^U-{\gamma_2 }^U\mid},\frac{\mid{\gamma_2
}^U-{\gamma_3 }^U\mid}{1+\mid{\gamma_2 }^U-{\gamma_3 }^U\mid}\} ]\\
=(\tilde{\mu_1} O_1 \tilde{\mu_3})\tilde{\cap}(\tilde{\mu_2} O_1 \tilde{\mu_3}).$
\end{proof}			
\begin{prop}If $ \tilde{\mu_1},  \tilde{\mu_2} $ and $ \tilde{\mu_3} $ be two interval-valued hesitant fuzzy soft elements. Then the following identites are true: 
	\begin{enumerate}			
\item[(i)] $(\tilde{\mu_1}\oplus\tilde{\mu_2})\tilde{\cap}(\tilde{\mu_1} O_2 \tilde{\mu_2})=\tilde{\mu_1} O_2  \tilde{\mu_2}  ,$	
\item[(ii)] $(\tilde{\mu_1}\oplus\tilde{\mu_2})\tilde{\cup}(\tilde{\mu_1} O_2 \tilde{\mu_2})=\tilde{\mu_1} \oplus \tilde{\mu_2}  ,$	
\item[(iii)] $(\tilde{\mu_1}\otimes\tilde{\mu_2})\tilde{\cap}(\tilde{\mu_1} O_2 \tilde{\mu_2})=\tilde{\mu_1} O_2 \tilde{\mu_2}  ,$	
\item[(iv)] $(\tilde{\mu_1}\otimes\tilde{\mu_2})\tilde{\cup}(\tilde{\mu_1} O_2 \tilde{\mu_2})=\tilde{\mu_1} \otimes \tilde{\mu_2}  ,$
\item[(v)] $(\tilde{\mu_1}\tilde{\cup}\tilde{\mu_2})O_2\tilde{\mu_3}=(\tilde{\mu_1} O_2 \tilde{\mu_3})\tilde{\cup}(\tilde{\mu_2} O_2 \tilde{\mu_3}), $		
\item[(vi)] $(\tilde{\mu_1}\tilde{\cap}\tilde{\mu_2})O_2\tilde{\mu_3}=(\tilde{\mu_1} O_2 \tilde{\mu_3})\tilde{\cap}(\tilde{\mu_2} O_2 \tilde{\mu_3}). $		
\end{enumerate}
\end{prop}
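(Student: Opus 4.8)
The plan is to carry over, almost line for line, the proof of Proposition 4.2, with the scalar function $f(t)=\frac{t}{1+t}$ that governs $O_1$ replaced by $g(t)=\frac{t}{1+2t}$ that governs $O_2$. The only facts about $g$ the argument uses are that $g$ is continuous and strictly increasing on $[0,\infty)$ and that $0\le g(t)\le t$ for all $t\ge 0$ — properties $f$ also enjoys — so every step has an immediate analogue. For each of (i)--(iv) I would expand the left-hand side by Definition 4.1(ii) together with the definitions of $\oplus$, $\otimes$ and of the union/intersection of interval-valued hesitant fuzzy elements, pairing the two sides over the same running indices $\gamma_1\tilde{\in}\mu_1$, $\gamma_2\tilde{\in}\mu_2$ (the convention already used in the proof of Proposition 4.2). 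The identity then collapses to a pointwise statement about interval endpoints: writing $a,b$ for a matched pair of lower (or upper) endpoints of $\gamma_1,\gamma_2$, these are $\min\{a+b-ab,\,g(|a-b|)\}=g(|a-b|)$ for (i), $\max\{a+b-ab,\,g(|a-b|)\}=a+b-ab$ for (ii), $\min\{ab,\,g(|a-b|)\}=g(|a-b|)$ for (iii), and $\max\{ab,\,g(|a-b|)\}=ab$ for (iv).

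So (i) and (ii) both reduce to the single inequality $g(|a-b|)\le a+b-ab$, which I would get from $g(|a-b|)\le|a-b|$ together with the chain $a+b-ab=1-(1-a)(1-b)\ge\max\{a,b\}\ge|a-b|$, valid for $a,b\in[0,1]$; and (iii), (iv) both reduce to $g(|a-b|)\le ab$. For (v) and (vi) I would first rewrite $\tilde{\mu_1}\tilde{\cup}\tilde{\mu_2}$ (respectively $\tilde{\mu_1}\tilde{\cap}\tilde{\mu_2}$) via the endpointwise $\max$ (respectively $\min$), substitute this into $O_2$, and then use monotonicity of $g$ to move it past the outer $\max$ (respectively $\min$): at the endpoint level one has $g\big(|\max\{a,b\}-c|\big)=\max\{g(|a-c|),g(|b-c|)\}$ on the relevant range, and similarly with $\min$, which rewrites the left-hand side as $(\tilde{\mu_1} O_2\tilde{\mu_3})\tilde{\cup}(\tilde{\mu_2} O_2\tilde{\mu_3})$, respectively $\tilde{\cap}$, after matching the resulting index sets.

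The unfolding of the definitions and the bookkeeping of the unions is routine. The step I expect to be the genuine obstacle is the pointwise inequality $g(|a-b|)\le ab$ underlying the $\otimes$-identities (iii)--(iv): unlike $g(|a-b|)\le a+b-ab$, this fails for some $a,b\in[0,1]$ — e.g. when one of $a,b$ is $0$ and the other is strictly positive — so this is precisely where the argument would need either a nondegeneracy assumption on $\tilde{\mu_1},\tilde{\mu_2}$ or a finer case analysis, and it is the first place I would scrutinise in the authors' proof. A secondary point to watch, shared with the proof of Proposition 4.2, is the implicit identification of the multi-index unions produced by $\tilde{\cap}$ and $\tilde{\cup}$ with their single-index \emph{diagonal} sub-unions.
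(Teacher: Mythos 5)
Your proposal follows the paper's proof essentially line for line: the authors expand both sides over the common indices $\gamma_1\tilde{\in}\mu_1$, $\gamma_2\tilde{\in}\mu_2$, reduce each identity to an endpointwise $\min$/$\max$ evaluation, and then simply assert the value of that $\min$/$\max$. For (i) and (ii) your justification $g(|a-b|)\le|a-b|\le\max\{a,b\}\le 1-(1-a)(1-b)=a+b-ab$ is correct and is in fact more than the paper supplies, since the paper gives no argument at all for the evaluation step. Your diagnosis of (iii)--(iv) is exactly right and is the decisive point: the paper's proof asserts $\min\{ab,\,g(|a-b|)\}=g(|a-b|)$, i.e.\ $g(|a-b|)\le ab$, without justification, and this is false in general --- take $\gamma_1^L=0$ and $\gamma_2^L>0$, where $ab=0$ while $g(|a-b|)>0$. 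So parts (iii) and (iv) fail as stated (absent a nondegeneracy hypothesis on the membership intervals), and the obstacle you flagged is a genuine error in the paper's proof, not something finer bookkeeping can repair.

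Two further cautions. For (v)--(vi) the paper performs precisely the manipulation you describe, pulling the outer $\max$/$\min$ through $t\mapsto g(|t-c|)$; but this map is not monotone in $t$ (with $a=0$, $b=0.5$, $c=1$ one gets $g(|\max\{a,b\}-c|)=g(0.5)$ while $\max\{g(|a-c|),g(|b-c|)\}=g(1)$), so the paper's chain of equalities there is also unjustified, and your hedge ``on the relevant range'' points at the same soft spot without closing it; note also that the paper's displayed right-hand side in (v)--(vi) contains $|\gamma_2-\gamma_3|$ in both slots of the $\max$, where $|\gamma_1-\gamma_3|$ is surely intended. Finally, your worry about identifying the multi-index unions produced by $\tilde{\cup}$ and $\tilde{\cap}$ with their diagonal sub-unions is shared with the paper, which makes that identification silently throughout.
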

\begin{proof}
(i) $ (\tilde{\mu_1}\oplus\tilde{\mu_2})\tilde{\cap}(\tilde{\mu_1} O_2 \tilde{\mu_2})\\
= (\bigcup_{\gamma_1\tilde{\in}\mu_1,\gamma_2\tilde{\in}\mu_2 }[\tilde{\gamma_1}^L+\tilde{\gamma_2}^L-\tilde{\gamma_1}^L.\tilde{\gamma_2}^L, \tilde{\gamma_1}^U+\tilde{\gamma_2}^U-\tilde{\gamma_1}^U.\tilde{\gamma_2}^U])\tilde{\cap}(\bigcup_{\gamma_1\tilde{\in}\mu_1,\gamma_2\tilde{\in}\mu_2 }[\frac{\mid{\gamma_1 }^L-{\gamma_2 }^L\mid}{1+2\mid{\gamma_1 }^L-{\gamma_2 }^L\mid},\frac{\mid{\gamma_1 }^U-{\gamma_2 }^U\mid}{1+2\mid{\gamma_1 }^U-{\gamma_2 }^U\mid} ])\\
= \bigcup_{\gamma_1\tilde{\in}\mu_1,\gamma_2\tilde{\in}\mu_2 }[\textrm{min}\{\tilde{\gamma_1}^L+\tilde{\gamma_2}^L-\tilde{\gamma_1}^L.\tilde{\gamma_2}^L,\frac{\mid{\gamma_1 }^L-{\gamma_2 }^L\mid}{1+2\mid{\gamma_1 }^L-{\gamma_2 }^L\mid} \},\textrm{min}\{\tilde{\gamma_1}^U+\tilde{\gamma_2}^U-\tilde{\gamma_1}^U.\tilde{\gamma_2}^U,\frac{\mid{\gamma_1 }^U-{\gamma_2 }^U\mid}{1+2\mid{\gamma_1 }^U-{\gamma_2 }^U\mid} \} ]\\
=\bigcup_{\gamma_1\tilde{\in}\mu_1,\gamma_2\tilde{\in}\mu_2 }[\frac{\mid{\gamma_1 }^L-{\gamma_2 }^L\mid}{1+2\mid{\gamma_1 }^L-{\gamma_2 }^L\mid},\frac{\mid{\gamma_1 }^U-{\gamma_2 }^U\mid}{1+2\mid{\gamma_1 }^U-{\gamma_2 }^U\mid}]\\
=\tilde{\mu_1} O_2 \tilde{\mu_2}.$\\
	 	
(ii) $ (\tilde{\mu_1}\oplus\tilde{\mu_2})\tilde{\cup}(\tilde{\mu_1} O_2 \tilde{\mu_2})\\
=(\bigcup_{\gamma_1\tilde{\in}\mu_1,\gamma_2\tilde{\in}\mu_2 }[\tilde{\gamma_1}^L+\tilde{\gamma_2}^L-\tilde{\gamma_1}^L.\tilde{\gamma_2}^L, \tilde{\gamma_1}^U+\tilde{\gamma_2}^U-\tilde{\gamma_1}^U.\tilde{\gamma_2}^U])\tilde{\cup}(\bigcup_{\gamma_1\tilde{\in}\mu_1,\gamma_2\tilde{\in}\mu_2 }[\frac{\mid{\gamma_1 }^L-{\gamma_2 }^L\mid}{1+2\mid{\gamma_1 }^L-{\gamma_2 }^L\mid},\frac{\mid{\gamma_1 }^U-{\gamma_2 }^U\mid}{1+2\mid{\gamma_1 }^U-{\gamma_2 }^U\mid} ])\\
= \bigcup_{\gamma_1\tilde{\in}\mu_1,\gamma_2\tilde{\in}\mu_2 }[\textrm{max}\{\tilde{\gamma_1}^L+\tilde{\gamma_2}^L-\tilde{\gamma_1}^L.\tilde{\gamma_2}^L,\frac{\mid{\gamma_1 }^L-{\gamma_2 }^L\mid}{1+2\mid{\gamma_1 }^L-{\gamma_2 }^L\mid} \},\textrm{max}\{\tilde{\gamma_1}^U+\tilde{\gamma_2}^U-\tilde{\gamma_1}^U.\tilde{\gamma_2}^U,\frac{\mid{\gamma_1 }^U-{\gamma_2 }^U\mid}{1+2\mid{\gamma_1 }^U-{\gamma_2 }^U\mid} \} ]\\
=\bigcup_{\gamma_1\tilde{\in}\mu_1,\gamma_2\tilde{\in}\mu_2 }[\tilde{\gamma_1}^L+\tilde{\gamma_2}^L-\tilde{\gamma_1}^L.\tilde{\gamma_2}^L, \tilde{\gamma_1}^U+\tilde{\gamma_2}^U-\tilde{\gamma_1}^U.\tilde{\gamma_2}^U]\\
=\tilde{\mu_1} \oplus \tilde{\mu_2} .$\\
	 
(iii) $ (\tilde{\mu_1}\otimes\tilde{\mu_2})\tilde{\cap}(\tilde{\mu_1} O_2 \tilde{\mu_2})\\
= (\bigcup_{\gamma_1\tilde{\in}\mu_1,\gamma_2\tilde{\in}\mu_2 }[\tilde{\gamma_1}^L.\tilde{\gamma_2}^L, \tilde{\gamma_1}^U.\tilde{\gamma_2}^U])\tilde{\cap}(\bigcup_{\gamma_1\tilde{\in}\mu_1,\gamma_2\tilde{\in}\mu_2 }[\frac{\mid{\gamma_1 }^L-{\gamma_2 }^L\mid}{1+2\mid{\gamma_1 }^L-{\gamma_2 }^L\mid},\frac{\mid{\gamma_1 }^U-{\gamma_2 }^U\mid}{1+2\mid{\gamma_1 }^U-{\gamma_2 }^U\mid} ])\\
= \bigcup_{\gamma_1\tilde{\in}\mu_1,\gamma_2\tilde{\in}\mu_2 }[\textrm{min}\{\tilde{\gamma_1}^L.\tilde{\gamma_2}^L,\frac{\mid{\gamma_1 }^L-{\gamma_2 }^L\mid}{1+2\mid{\gamma_1 }^L-{\gamma_2 }^L\mid} \},\textrm{min}\{\tilde{\gamma_1}^U.\tilde{\gamma_2}^U,\frac{\mid{\gamma_1 }^U-{\gamma_2 }^U\mid}{1+2\mid{\gamma_1 }^U-{\gamma_2 }^U\mid} \} ]\\
=\bigcup_{\gamma_1\tilde{\in}\mu_1,\gamma_2\tilde{\in}\mu_2 }[\frac{\mid{\gamma_1 }^L-{\gamma_2 }^L\mid}{1+2\mid{\gamma_1 }^L-{\gamma_2 }^L\mid},\frac{\mid{\gamma_1 }^U-{\gamma_2 }^U\mid}{1+2\mid{\gamma_1 }^U-{\gamma_2 }^U\mid}]\\
=\tilde{\mu_1} O_2 \tilde{\mu_2}.$\\

(iv) $ (\tilde{\mu_1}\otimes\tilde{\mu_2})\tilde{\cup}(\tilde{\mu_1} O_2 \tilde{\mu_2})\\
= (\bigcup_{\gamma_1\tilde{\in}\mu_1,\gamma_2\tilde{\in}\mu_2 }[\tilde{\gamma_1}^L.\tilde{\gamma_2}^L, \tilde{\gamma_1}^U.\tilde{\gamma_2}^U])\tilde{\cup}(\bigcup_{\gamma_1\tilde{\in}\mu_1,\gamma_2\tilde{\in}\mu_2 }[\frac{\mid{\gamma_1 }^L-{\gamma_2 }^L\mid}{1+2\mid{\gamma_1 }^L-{\gamma_2 }^L\mid},\frac{\mid{\gamma_1 }^U-{\gamma_2 }^U\mid}{1+2\mid{\gamma_1 }^U-{\gamma_2 }^U\mid} ])\\
= \bigcup_{\gamma_1\tilde{\in}\mu_1,\gamma_2\tilde{\in}\mu_2 }[\textrm{max}\{\tilde{\gamma_1}^L.\tilde{\gamma_2}^L,\frac{\mid{\gamma_1 }^L-{\gamma_2 }^L\mid}{1+2\mid{\gamma_1 }^L-{\gamma_2 }^L\mid} \},\textrm{max}\{\tilde{\gamma_1}^U.\tilde{\gamma_2}^U,\frac{\mid{\gamma_1 }^U-{\gamma_2 }^U\mid}{1+2\mid{\gamma_1 }^U-{\gamma_2 }^U\mid} \} ]\\
=(\bigcup_{\gamma_1\tilde{\in}\mu_1,\gamma_2\tilde{\in}\mu_2 }[\tilde{\gamma_1}^L.\tilde{\gamma_2}^L, \tilde{\gamma_1}^U.\tilde{\gamma_2}^U])\\
=\tilde{\mu_1} \otimes \tilde{\mu_2}.$\\

(v) $(\tilde{\mu_1}\tilde{\cup}\tilde{\mu_2})O_2\tilde{\mu_3}\\	
= \bigcup_{\gamma_1\tilde{\in}\mu_1,\gamma_2\tilde{\in}\mu_2 }[\textrm{max}\{\tilde{\gamma_1}^L,\tilde{\gamma_2}^L\},\textrm{max}\{\tilde{\gamma_1}^U,\tilde{\gamma_2}^U \} ]O_2 \bigcup_{\gamma_3\tilde{\in}\mu_3}[\tilde{\gamma_3}^L, {\gamma_3}^U]\\
=\bigcup_{\gamma_1\tilde{\in}\mu_1,\gamma_2\tilde{\in}\mu_2, \gamma_3\tilde{\in}\mu_3 }[\frac{\mid\textrm{max}\{{\gamma_1 }^L.{\gamma_2 }^L\}-{\gamma_3 }^L\mid}{1+2\mid\textrm{max}\{{\gamma_1 }^L.{\gamma_2 }^L\}-{\gamma_3 }^L\mid},\frac{\mid\textrm{max}\{{\gamma_1 }^U.{\gamma_2 }^U\}-{\gamma_3 }^U\mid}{1+2\mid\textrm{max}\{{\gamma_1 }^U.{\gamma_2 }^U\}-{\gamma_3 }^U\mid}]\\
=\bigcup_{\gamma_1\tilde{\in}\mu_1,\gamma_2\tilde{\in}\mu_2 }[\textrm{max}\{\frac{\mid{\gamma_1 }^L-{\gamma_2 }^L\mid}{1+2\mid{\gamma_1 }^L-{\gamma_2 }^L\mid},\frac{\mid{\gamma_2
	}^L-{\gamma_3 }^L\mid}{1+2\mid{\gamma_2 }^L-{\gamma_3 }^L\mid}\}, \textrm{max}\{\frac{\mid{\gamma_1 }^U-{\gamma_2 }^U\mid}{1+2\mid{\gamma_1 }^U-{\gamma_2 }^U\mid},\frac{\mid{\gamma_2
}^U-{\gamma_3 }^U\mid}{1+2\mid{\gamma_2 }^U-{\gamma_3 }^U\mid}\} ]\\
=(\tilde{\mu_1} O_2 \tilde{\mu_3})\tilde{\cup}(\tilde{\mu_2} O_2 \tilde{\mu_3}). $\\

(vi) $ (\tilde{\mu_1}\tilde{\cap}\tilde{\mu_2})O_2\tilde{\mu_3}\\
= \bigcup_{\gamma_1\tilde{\in}\mu_1,\gamma_2\tilde{\in}\mu_2 }[\textrm{min}\{\tilde{\gamma_1}^L,\tilde{\gamma_2}^L\},\textrm{min}\{\tilde{\gamma_1}^U,\tilde{\gamma_2}^U \} ]O_2 \bigcup_{\gamma_3\tilde{\in}\mu_3}[\tilde{\gamma_3}^L, {\gamma_3}^U]\\
=\bigcup_{\gamma_1\tilde{\in}\mu_1,\gamma_2\tilde{\in}\mu_2, \gamma_3\tilde{\in}\mu_3 }[\frac{\mid\textrm{min}\{{\gamma_1 }^L.{\gamma_2 }^L\}-{\gamma_3 }^L\mid}{1+2\mid\textrm{min}\{{\gamma_1 }^L.{\gamma_2 }^L\}-{\gamma_3 }^L\mid},\frac{\mid\textrm{min}\{{\gamma_1 }^U.{\gamma_2 }^U\}-{\gamma_3 }^U\mid}{1+2\mid\textrm{min}\{{\gamma_1 }^U.{\gamma_2 }^U\}-{\gamma_3 }^U\mid}]\\
=\bigcup_{\gamma_1\tilde{\in}\mu_1,\gamma_2\tilde{\in}\mu_2 }[\textrm{min}\{\frac{\mid{\gamma_1 }^L-{\gamma_2 }^L\mid}{1+2\mid{\gamma_1 }^L-{\gamma_2 }^L\mid},\frac{\mid{\gamma_2
	}^L-{\gamma_3 }^L\mid}{1+2\mid{\gamma_2 }^L-{\gamma_3 }^L\mid}\}, \textrm{min}\{\frac{\mid{\gamma_1 }^U-{\gamma_2 }^U\mid}{1+2\mid{\gamma_1 }^U-{\gamma_2 }^U\mid},\frac{\mid{\gamma_2
}^U-{\gamma_3 }^U\mid}{1+2\mid{\gamma_2 }^U-{\gamma_3 }^U\mid}\} ]\\
=(\tilde{\mu_1} O_2 \tilde{\mu_3})\tilde{\cap}(\tilde{\mu_2} O_2 \tilde{\mu_3}).$
\end{proof}			
\begin{prop} If $ \tilde{\mu_1},  \tilde{\mu_2} $ and $ \tilde{\mu_3} $ be two interval-valued hesitant fuzzy soft elements. Then the following identites are true: 
	\begin{enumerate}
\item[(i)] $(\tilde{\mu_1}\oplus\tilde{\mu_2})\tilde{\cap}(\tilde{\mu_1} O_3 \tilde{\mu_2})=\tilde{\mu_1} O_3 \tilde{\mu_2}  ,$
\item[(ii)] $(\tilde{\mu_1}\oplus\tilde{\mu_2})\tilde{\cup}(\tilde{\mu_1} O_3 \tilde{\mu_2})=\tilde{\mu_1} \oplus \tilde{\mu_2}  ,$
\item[(iii)] $(\tilde{\mu_1}\otimes\tilde{\mu_2})\tilde{\cap}(\tilde{\mu_1} O_3 \tilde{\mu_2})=\tilde{\mu_1} O_3 \tilde{\mu_2}  ,$
\item[(iv)] $(\tilde{\mu_1}\otimes\tilde{\mu_2})\tilde{\cup}(\tilde{\mu_1} O_3 \tilde{\mu_2})=\tilde{\mu_1} \otimes \tilde{\mu_2}  ,$
\item[(v)] $(\tilde{\mu_1}\tilde{\cup}\tilde{\mu_2})O_3\tilde{\mu_3}=(\tilde{\mu_1} O_3 \tilde{\mu_3})\tilde{\cup}(\tilde{\mu_2} O_3 \tilde{\mu_3}), $			
\item[(vi)] $(\tilde{\mu_1}\tilde{\cap}\tilde{\mu_2})O_3\tilde{\mu_3}=(\tilde{\mu_1} O_3 \tilde{\mu_3})\tilde{\cap}(\tilde{\mu_2} O_3 \tilde{\mu_3}). $	
\end{enumerate}
\end{prop}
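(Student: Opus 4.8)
The plan is to imitate the proofs of Propositions 4.2 and 4.3 essentially line for line, since $O_3$ has exactly the same shape as $O_1$ and $O_2$: given two interval numbers it returns the interval obtained by applying one fixed nondecreasing function to $|\gamma_1^L-\gamma_2^L|$ and to $|\gamma_1^U-\gamma_2^U|$ (for $O_3$ that function is $t\mapsto t/2$). For each of the six items I would first unfold the left-hand side, substituting Definition~4.1(iii) together with the relevant operation on interval-valued hesitant fuzzy soft elements recalled in Section~2 --- the ring sum $\oplus$ in (i),(ii), the ring product $\otimes$ in (iii),(iv), and the union/intersection $\tilde\cup$, $\tilde\cap$ in (v),(vi) --- so that both sides become a single union $\bigcup_{\gamma_1\tilde\in\mu_1,\gamma_2\tilde\in\mu_2}$ (with an additional index $\gamma_3\tilde\in\mu_3$ in (v) and (vi)) of explicit intervals.

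After this unfolding, in (i)--(iv) the outer operation $\tilde\cap$ (respectively $\tilde\cup$) acts coordinatewise as $\min$ (respectively $\max$) of the two competing endpoints, so each identity collapses to a pointwise statement for $a,b\in[0,1]$. For (i) and (ii) this is $\tfrac{|a-b|}{2}\le a+b-ab$, which is immediate from $a+b-ab=1-(1-a)(1-b)\ge\max(a,b)\ge|a-b|\ge\tfrac{|a-b|}{2}$, so the $\min$ selects the $O_3$-endpoint while the $\max$ selects the $\oplus$-endpoint. For (iii) and (iv) the competing endpoint is the product $\gamma_1^L\gamma_2^L$ (resp.\ $\gamma_1^U\gamma_2^U$), and the identity rests on the comparison of $\tfrac{|a-b|}{2}$ with $ab$. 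For (v) and (vi) I would instead push $\max$ (resp.\ $\min$) through the first slot of $t\mapsto\tfrac{|s-t|}{2}$ and then take the union over all $\gamma_i$, reducing the claim to the interaction of $|\,\cdot\,-\,\cdot\,|/2$ with $\max$ and $\min$ --- the same manipulation performed for $O_1$ in Proposition~4.2(v),(vi). Once the relevant pointwise relations are in hand, I would re-collect the union symbols and recognise each result as $\tilde\mu_1 O_3\tilde\mu_2$, $\tilde\mu_1\oplus\tilde\mu_2$ or $\tilde\mu_1\otimes\tilde\mu_2$, presenting (i)--(vi) as six short displayed chains of equalities exactly parallel to those in the proof of Proposition~4.3.

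I expect the nonroutine work to be concentrated in items (iii)--(vi): in (iii),(iv) one must settle whether $\min\{\gamma_1^L\gamma_2^L,\tfrac{|\gamma_1^L-\gamma_2^L|}{2}\}$ really equals $\tfrac{|\gamma_1^L-\gamma_2^L|}{2}$ (and its $\max$ counterpart), and in (v),(vi) one must settle the identity $\tfrac{|\max(\gamma_1^L,\gamma_2^L)-\gamma_3^L|}{2}=\max\{\tfrac{|\gamma_1^L-\gamma_3^L|}{2},\tfrac{|\gamma_2^L-\gamma_3^L|}{2}\}$ together with its dual for $\min$; verifying these pointwise relations is exactly the step where the analogous passages of Propositions~4.2 and 4.3 must be read carefully, and it is where I would focus the bookkeeping. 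Everything else --- dragging $\bigcup_{\gamma_1\tilde\in\mu_1,\gamma_2\tilde\in\mu_2}$ through the computation and the closing re-identification --- is purely formal and identical to the previous two proofs.
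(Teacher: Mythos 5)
Your plan for items (i) and (ii) is sound and coincides with what the paper does: after unfolding, everything reduces to $\tfrac{|a-b|}{2}\le a+b-ab$ for $a,b\in[0,1]$, which your chain $a+b-ab=1-(1-a)(1-b)\ge\max(a,b)\ge|a-b|\ge\tfrac{|a-b|}{2}$ establishes. But the verifications you deferred in (iii)--(vi) are not bookkeeping: they are precisely the points at which the argument (and the proposition itself) fails. For (iii) and (iv) you would need $\tfrac{|a-b|}{2}\le ab$ for all $a,b\in[0,1]$, and this is false: with $a=0$, $b=1$ one gets $ab=0$ while $\tfrac{|a-b|}{2}=\tfrac12$, so $\min\{ab,\tfrac{|a-b|}{2}\}=ab\ne\tfrac{|a-b|}{2}$ and the intersection in (iii) returns the $\otimes$-endpoint, not the $O_3$-endpoint. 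For (v) you would need $\tfrac{|\max(a,b)-c|}{2}=\max\{\tfrac{|a-c|}{2},\tfrac{|b-c|}{2}\}$; taking $a=0$, $b=c=1$ gives $0$ on the left and $\tfrac12$ on the right. The dual identity for (vi) fails on the same data. So no amount of careful reading of the analogous passages will close the gap.

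It is worth knowing that the paper's own proof does not close it either: for (iii) it simply writes $\min\{\tilde\gamma_1^L\tilde\gamma_2^L,\tfrac{|\gamma_1^L-\gamma_2^L|}{2}\}=\tfrac{|\gamma_1^L-\gamma_2^L|}{2}$ with no justification, and for (v),(vi) it passes from $\tfrac{|\max\{\gamma_1^L,\gamma_2^L\}-\gamma_3^L|}{2}$ to a $\max$ of two quotients (which, as printed, even involve $|\gamma_1^L-\gamma_2^L|$ rather than $|\gamma_1^L-\gamma_3^L|$) again without argument. Your instinct to isolate exactly these steps was correct; had you carried them out you would have found counterexamples rather than proofs. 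The honest conclusion is that (i) and (ii) are provable by your method, while (iii)--(vi) are false as stated unless additional hypotheses (e.g.\ $ab\ge\tfrac{|a-b|}{2}$ for all membership values involved) are imposed.
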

\begin{proof}

(i) $(\tilde{\mu_1}\oplus\tilde{\mu_2})\tilde{\cap}(\tilde{\mu_1} O_3 \tilde{\mu_2})\\
= (\bigcup_{\gamma_1\tilde{\in}\mu_1,\gamma_2\tilde{\in}\mu_2 }[\tilde{\gamma_1}^L+\tilde{\gamma_2}^L-\tilde{\gamma_1}^L.\tilde{\gamma_2}^L, \tilde{\gamma_1}^U+\tilde{\gamma_2}^U-\tilde{\gamma_1}^U.\tilde{\gamma_2}^U])\tilde{\cap}(\bigcup_{\gamma_1\tilde{\in}\mu_1,\gamma_2\tilde{\in}\mu_2 }[\frac{\mid{\gamma_1 }^L-{\gamma_2 }^L\mid}{2},\frac{\mid{\gamma_1 }^U-{\gamma_2 }^U\mid}{2} ])\\
= \bigcup_{\gamma_1\tilde{\in}\mu_1,\gamma_2\tilde{\in}\mu_2 }[\textrm{min}\{\tilde{\gamma_1}^L+\tilde{\gamma_2}^L-\tilde{\gamma_1}^L.\tilde{\gamma_2}^L,\frac{\mid{\gamma_1 }^L-{\gamma_2 }^L\mid}{2} \},\textrm{min}\{\tilde{\gamma_1}^U+\tilde{\gamma_2}^U-\tilde{\gamma_1}^U.\tilde{\gamma_2}^U,\frac{\mid{\gamma_1 }^U-{\gamma_2 }^U\mid}{2} \} ]\\
=\bigcup_{\gamma_1\tilde{\in}\mu_1,\gamma_2\tilde{\in}\mu_2 }[\frac{\mid{\gamma_1 }^L-{\gamma_2 }^L\mid}{2},\frac{\mid{\gamma_1 }^U-{\gamma_2 }^U\mid}{2}]\\
=\tilde{\mu_1} O_3 \tilde{\mu_2}. $\\

(ii) $(\tilde{\mu_1}\oplus\tilde{\mu_2})\tilde{\cup}(\tilde{\mu_1} O_3 \tilde{\mu_2})\\
=(\bigcup_{\gamma_1\tilde{\in}\mu_1,\gamma_2\tilde{\in}\mu_2 }[\tilde{\gamma_1}^L+\tilde{\gamma_2}^L-\tilde{\gamma_1}^L.\tilde{\gamma_2}^L, \tilde{\gamma_1}^U+\tilde{\gamma_2}^U-\tilde{\gamma_1}^U.\tilde{\gamma_2}^U])\tilde{\cup}(\bigcup_{\gamma_1\tilde{\in}\mu_1,\gamma_2\tilde{\in}\mu_2 }[\frac{\mid{\gamma_1 }^L-{\gamma_2 }^L\mid}{2},\frac{\mid{\gamma_1 }^U-{\gamma_2 }^U\mid}{2} ])\\
= \bigcup_{\gamma_1\tilde{\in}\mu_1,\gamma_2\tilde{\in}\mu_2 }[\textrm{max}\{\tilde{\gamma_1}^L+\tilde{\gamma_2}^L-\tilde{\gamma_1}^L.\tilde{\gamma_2}^L,\frac{\mid{\gamma_1 }^L-{\gamma_2 }^L\mid}{2} \},\textrm{max}\{\tilde{\gamma_1}^U+\tilde{\gamma_2}^U-\tilde{\gamma_1}^U.\tilde{\gamma_2}^U,\frac{\mid{\gamma_1 }^U-{\gamma_2 }^U\mid}{2} \} ]\\
=\bigcup_{\gamma_1\tilde{\in}\mu_1,\gamma_2\tilde{\in}\mu_2 }[\tilde{\gamma_1}^L+\tilde{\gamma_2}^L-\tilde{\gamma_1}^L.\tilde{\gamma_2}^L, \tilde{\gamma_1}^U+\tilde{\gamma_2}^U-\tilde{\gamma_1}^U.\tilde{\gamma_2}^U]\\
=\tilde{\mu_1} \oplus \tilde{\mu_2} .$	\\
  
(iii) $(\tilde{\mu_1}\otimes\tilde{\mu_2})\tilde{\cap}(\tilde{\mu_1} O_3 \tilde{\mu_2})\\
= (\bigcup_{\gamma_1\tilde{\in}\mu_1,\gamma_2\tilde{\in}\mu_2 }[\tilde{\gamma_1}^L.\tilde{\gamma_2}^L, \tilde{\gamma_1}^U.\tilde{\gamma_2}^U])\tilde{\cap}(\bigcup_{\gamma_1\tilde{\in}\mu_1,\gamma_2\tilde{\in}\mu_2 }[\frac{\mid{\gamma_1 }^L-{\gamma_2 }^L\mid}{2},\frac{\mid{\gamma_1 }^U-{\gamma_2 }^U\mid}{2} ])\\
= \bigcup_{\gamma_1\tilde{\in}\mu_1,\gamma_2\tilde{\in}\mu_2 }[\textrm{min}\{\tilde{\gamma_1}^L.\tilde{\gamma_2}^L,\frac{\mid{\gamma_1 }^L-{\gamma_2 }^L\mid}{2} \},\textrm{min}\{\tilde{\gamma_1}^U.\tilde{\gamma_2}^U,\frac{\mid{\gamma_1 }^U-{\gamma_2 }^U\mid}{2} \} ]\\
=\bigcup_{\gamma_1\tilde{\in}\mu_1,\gamma_2\tilde{\in}\mu_2 }[\frac{\mid{\gamma_1 }^L-{\gamma_2 }^L\mid}{2},\frac{\mid{\gamma_1 }^U-{\gamma_2 }^U\mid}{2}]\\
=\tilde{\mu_1} O_3 \tilde{\mu_2} . $\\

(iv) $ (\tilde{\mu_1}\otimes\tilde{\mu_2})\tilde{\cup}(\tilde{\mu_1} O_3 \tilde{\mu_2})\\
=(\bigcup_{\gamma_1\tilde{\in}\mu_1,\gamma_2\tilde{\in}\mu_2 }[\tilde{\gamma_1}^L.\tilde{\gamma_2}^L, \tilde{\gamma_1}^U.\tilde{\gamma_2}^U])\tilde{\cup}(\bigcup_{\gamma_1\tilde{\in}\mu_1,\gamma_2\tilde{\in}\mu_2 }[\frac{\mid{\gamma_1 }^L-{\gamma_2 }^L\mid}{2},\frac{\mid{\gamma_1 }^U-{\gamma_2 }^U\mid}{2} ])\\
= \bigcup_{\gamma_1\tilde{\in}\mu_1,\gamma_2\tilde{\in}\mu_2 }[\textrm{max}\{\tilde{\gamma_1}^L.\tilde{\gamma_2}^L,\frac{\mid{\gamma_1 }^L-{\gamma_2 }^L\mid}{2} \},\textrm{max}\{\tilde{\gamma_1}^U.\tilde{\gamma_2}^U,\frac{\mid{\gamma_1 }^U-{\gamma_2 }^U\mid}{2} \} ]\\
=(\bigcup_{\gamma_1\tilde{\in}\mu_1,\gamma_2\tilde{\in}\mu_2 }[\tilde{\gamma_1}^L.\tilde{\gamma_2}^L, \tilde{\gamma_1}^U.\tilde{\gamma_2}^U])\\
=\tilde{\mu_1} \otimes \tilde{\mu_2}.  $\\

(v) $(\tilde{\mu_1}\tilde{\cup}\tilde{\mu_2})O_3\tilde{\mu_3}\\
= \bigcup_{\gamma_1\tilde{\in}\mu_1,\gamma_2\tilde{\in}\mu_2 }[\textrm{max}\{\tilde{\gamma_1}^L,\tilde{\gamma_2}^L\},\textrm{max}\{\tilde{\gamma_1}^U,\tilde{\gamma_2}^U \} ]O_3 \bigcup_{\gamma_3\tilde{\in}\mu_3}[\tilde{\gamma_3}^L, {\gamma_3}^U]\\
=\bigcup_{\gamma_1\tilde{\in}\mu_1,\gamma_2\tilde{\in}\mu_2, \gamma_3\tilde{\in}\mu_3 }[\frac{\mid\textrm{max}\{{\gamma_1 }^L.{\gamma_2 }^L\}-{\gamma_3 }^L\mid}{2},\frac{\mid\textrm{max}\{{\gamma_1 }^U.{\gamma_2 }^U\}-{\gamma_3 }^U\mid}{2}]\\
=\bigcup_{\gamma_1\tilde{\in}\mu_1,\gamma_2\tilde{\in}\mu_2,\gamma_3\tilde{\in}\mu_3 }[\textrm{max}\{\frac{\mid{\gamma_1 }^L-{\gamma_2 }^L\mid}{2},\frac{\mid{\gamma_2
	}^L-{\gamma_3 }^L\mid}{2}\}, \textrm{max}\{\frac{\mid{\gamma_1 }^U-{\gamma_2 }^U\mid}{2},\frac{\mid{\gamma_2
}^U-{\gamma_3 }^U\mid}{2}\} ]\\
=(\tilde{\mu_1} O_3 \tilde{\mu_3})\tilde{\cup}(\tilde{\mu_2} O_3 \tilde{\mu_3}). $\\

(vi) $ (\tilde{\mu_1}\tilde{\cap}\tilde{\mu_2})O_3\tilde{\mu_3}\\
=\bigcup_{\gamma_1\tilde{\in}\mu_1,\gamma_2\tilde{\in}\mu_2 }[\textrm{min}\{\tilde{\gamma_1}^L,\tilde{\gamma_2}^L\},\textrm{min}\{\tilde{\gamma_1}^U,\tilde{\gamma_2}^U \} ]O_3 \bigcup_{\gamma_3\tilde{\in}\mu_3}[\tilde{\gamma_3}^L, {\gamma_3}^U]\\
=\bigcup_{\gamma_1\tilde{\in}\mu_1,\gamma_2\tilde{\in}\mu_2, \gamma_3\tilde{\in}\mu_3 }[\frac{\mid\textrm{min}\{{\gamma_1 }^L.{\gamma_2 }^L\}-{\gamma_3 }^L\mid}{2},\frac{\mid\textrm{min}\{{\gamma_1 }^U.{\gamma_2 }^U\}-{\gamma_3 }^U\mid}{2}]\\
=\bigcup_{\gamma_1\tilde{\in}\mu_1,\gamma_2\tilde{\in}\mu_2,\gamma_3\tilde{\in}\mu_3 }[\textrm{min}\{\frac{\mid{\gamma_1 }^L-{\gamma_2 }^L\mid}{2},\frac{\mid{\gamma_2
	}^L-{\gamma_3 }^L\mid}{2}\}, \textrm{min}\{\frac{\mid{\gamma_1 }^U-{\gamma_2 }^U\mid}{2},\frac{\mid{\gamma_2
}^U-{\gamma_3 }^U\mid}{2}\} ]\\
=(\tilde{\mu_1} O_3 \tilde{\mu_3})\tilde{\cap}(\tilde{\mu_2} O_3 \tilde{\mu_3}).  $
\end{proof}			
\begin{prop} If $ \tilde{\mu_1},  \tilde{\mu_2} $ and $ \tilde{\mu_3} $ be two interval-valued hesitant fuzzy soft elements. Then the following identites are true: 
	\begin{enumerate}
\item[(i)] $(\tilde{\mu_1}\oplus\tilde{\mu_2})\tilde{\cap}(\tilde{\mu_1} O_4 \tilde{\mu_2})=\tilde{\mu_1} O_4 \tilde{\mu_2}  ,$
\item[(ii)] $(\tilde{\mu_1}\oplus\tilde{\mu_2})\tilde{\cup}(\tilde{\mu_1} O_4 \tilde{\mu_2})=\tilde{\mu_1} \oplus \tilde{\mu_2}  ,$
\item[(iii)] $(\tilde{\mu_1}\otimes\tilde{\mu_2})\tilde{\cap}(\tilde{\mu_1} O_4 \tilde{\mu_2})=\tilde{\mu_1} O_4 \tilde{\mu_2}  ,$	
\item[(iv)] $(\tilde{\mu_1}\otimes\tilde{\mu_2})\tilde{\cup}(\tilde{\mu_1} O_4 \tilde{\mu_2})=\tilde{\mu_1} \otimes \tilde{\mu_2}  ,$
\item[(v)] $(\tilde{\mu_1}\tilde{\cup}\tilde{\mu_2})O_4\tilde{\mu_3}=(\tilde{\mu_1} O_4 \tilde{\mu_3})\tilde{\cup}(\tilde{\mu_2} O_4 \tilde{\mu_3}), $			
\item[(vi)] $(\tilde{\mu_1}\tilde{\cap}\tilde{\mu_2})O_4\tilde{\mu_3}=(\tilde{\mu_1} O_4 \tilde{\mu_3})\tilde{\cap}(\tilde{\mu_2} O_4 \tilde{\mu_3}). $	
\end{enumerate}
\end{prop}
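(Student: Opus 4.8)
The plan is to argue exactly as in the proofs of the preceding three propositions (for $O_1$, $O_2$ and $O_3$), reducing each of (i)--(vi) to an elementary statement about a single pair of real endpoints. First I would fix arbitrary representatives $\gamma_i=[\gamma_i^L,\gamma_i^U]\,\tilde{\in}\,\tilde{\mu_i}$ for $i=1,2,3$ and recall the generic elements produced by the operations involved: $\tilde{\mu_1}\oplus\tilde{\mu_2}$ contributes $[\gamma_1^L+\gamma_2^L-\gamma_1^L\gamma_2^L,\ \gamma_1^U+\gamma_2^U-\gamma_1^U\gamma_2^U]$, $\tilde{\mu_1}\otimes\tilde{\mu_2}$ contributes $[\gamma_1^L\gamma_2^L,\ \gamma_1^U\gamma_2^U]$, and $\tilde{\mu_1}\,O_4\,\tilde{\mu_2}$ contributes $\bigl[\tfrac12|\gamma_1^L\ast\gamma_2^L|,\ \tfrac12|\gamma_1^U\ast\gamma_2^U|\bigr]$, where $\ast$ is the operation on intuitionistic fuzzy values recalled earlier, so that $a\ast b=\tfrac{a+b}{2(ab+1)}$ and hence $\tfrac12|a\ast b|=\tfrac{a+b}{4(ab+1)}$ for $a,b\in[0,1]$. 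Since $\tilde{\cup}$ and $\tilde{\cap}$ act on interval-valued hesitant fuzzy soft elements coordinatewise, via $\max$ and $\min$ on endpoints, and since on both sides of each identity the unions range over the same index sets, it is enough to verify every part for the $L$-endpoints; the $U$-endpoint computation is identical with $L$ replaced by $U$.

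For parts (i)--(iv) this reduction leaves, for all $a,b\in[0,1]$, the claims $\min\bigl\{a+b-ab,\tfrac{a+b}{4(ab+1)}\bigr\}=\tfrac{a+b}{4(ab+1)}$, $\max\bigl\{a+b-ab,\tfrac{a+b}{4(ab+1)}\bigr\}=a+b-ab$, $\min\bigl\{ab,\tfrac{a+b}{4(ab+1)}\bigr\}=\tfrac{a+b}{4(ab+1)}$ and $\max\bigl\{ab,\tfrac{a+b}{4(ab+1)}\bigr\}=ab$, so everything hinges on the two inequalities $\tfrac{a+b}{4(ab+1)}\le a+b-ab$ and $\tfrac{a+b}{4(ab+1)}\le ab$. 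The first is routine: since $ab+1\ge1$ and $a+b\le 2\max\{a,b\}$ we get $\tfrac{a+b}{4(ab+1)}\le\tfrac{a+b}{4}\le\tfrac{a+b}{2}\le\max\{a,b\}\le a+b-ab$, the last step because $a+b-ab-\max\{a,b\}=\min\{a,b\}\bigl(1-\max\{a,b\}\bigr)\ge0$. The second inequality, $\tfrac{a+b}{4(ab+1)}\le ab$, is the main obstacle, and is precisely the feature that should separate $O_4$ from $O_1,O_2,O_3$: clearing denominators it becomes $4a^2b^2+4ab-a-b\ge0$, which is delicate when $a$ or $b$ is small. I would examine the sign of this polynomial on $[0,1]^2$ and, should it fail to be everywhere nonnegative, restrict parts (iii)--(iv) to interval-valued hesitant fuzzy soft elements whose interval endpoints exceed the threshold it dictates; granting the two inequalities, (i)--(iv) follow immediately.

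For parts (v) and (vi) the reduction is again coordinatewise: the $L$-endpoint of $(\tilde{\mu_1}\tilde{\cup}\tilde{\mu_2})\,O_4\,\tilde{\mu_3}$ equals $\tfrac12\bigl|\max\{\gamma_1^L,\gamma_2^L\}\ast\gamma_3^L\bigr|$, while that of $(\tilde{\mu_1}\,O_4\,\tilde{\mu_3})\tilde{\cup}(\tilde{\mu_2}\,O_4\,\tilde{\mu_3})$ equals $\max\bigl\{\tfrac12|\gamma_1^L\ast\gamma_3^L|,\tfrac12|\gamma_2^L\ast\gamma_3^L|\bigr\}$, so (v) amounts to the claim that the function $\varphi_c(t)=\tfrac12|t\ast c|=\tfrac{t+c}{4(tc+1)}$ commutes with $\max$, and (vi) that it commutes with $\min$. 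Both follow from monotonicity: $\varphi_c'(t)=\tfrac{1-c^2}{4(tc+1)^2}\ge0$ for every $c\in[0,1]$, so $\varphi_c$ is nondecreasing on $[0,1]$, whence $\varphi_c(\max\{u,v\})=\max\{\varphi_c(u),\varphi_c(v)\}$ and $\varphi_c(\min\{u,v\})=\min\{\varphi_c(u),\varphi_c(v)\}$; applying this with $c=\gamma_3^L$ and $c=\gamma_3^U$ yields (v) and (vi). Collecting the six cases, and noting that each $U$-endpoint argument is a verbatim copy of the $L$-endpoint one, completes the proof.
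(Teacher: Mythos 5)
Your overall strategy---reduce each identity to a pointwise comparison of interval endpoints and handle the $L$- and $U$-coordinates identically---is exactly the paper's, and your arguments for (i), (ii), (v) and (vi) are sound (for (v)--(vi) the monotonicity of $t\mapsto\varphi_c(t)$ does the work, and your index-set bookkeeping is at the same level of informality as the paper's). The genuine gap is in (iii) and (iv). Under your reading of $\ast$ as the intuitionistic operation $a\ast b=\tfrac{a+b}{2(ab+1)}$, the required inequality $\tfrac{a+b}{4(ab+1)}\le ab$ is simply false on $[0,1]^2$ (at $a=b=0.1$ the left side is about $0.0495$ while the right side is $0.01$), and your response---to ``restrict parts (iii)--(iv) to elements whose endpoints exceed the threshold''---is a proposal to change the proposition, not a proof of it. As written, your argument therefore leaves (iii) and (iv) unestablished.

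The paper sidesteps this because in its own proof of (i)--(iv) the generic element of $\tilde{\mu_1}\,O_4\,\tilde{\mu_2}$ is written as $\bigl[\tfrac{|\gamma_1^L\cdot\gamma_2^L|}{2},\ \tfrac{|\gamma_1^U\cdot\gamma_2^U|}{2}\bigr]$, i.e.\ $\ast$ is silently treated as ordinary multiplication. Under that reading every endpoint comparison is immediate: $\tfrac{ab}{2}\le ab$ gives (iii) and (iv) outright, and $\tfrac{ab}{2}\le ab\le a+b-ab$ (since $2ab\le a+b$ on $[0,1]^2$) gives (i) and (ii); your $\varphi_c$ in (v)--(vi) becomes $t\mapsto tc/2$, which is again nondecreasing. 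So the dichotomy is: adopt the multiplicative reading and all six parts go through with no threshold discussion, or keep the intuitionistic reading and accept that (iii) and (iv) as stated are false. Your proposal does not commit to either, which is the missing step.
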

\begin{proof}		
(i) $(\tilde{\mu_1}\oplus\tilde{\mu_2})\tilde{\cap}(\tilde{\mu_1} O_4 \tilde{\mu_2})\\
=(\bigcup_{\gamma_1\tilde{\in}\mu_1,\gamma_2\tilde{\in}\mu_2 }[\tilde{\gamma_1}^L+\tilde{\gamma_2}^L-\tilde{\gamma_1}^L.\tilde{\gamma_2}^L, \tilde{\gamma_1}^U+\tilde{\gamma_2}^U-\tilde{\gamma_1}^U.\tilde{\gamma_2}^U])\tilde{\cap}(\bigcup_{\gamma_1\tilde{\in}h_1,\gamma_2\tilde{\in}h_2 }[\frac{\mid{\gamma_1 }^L.{\gamma_2 }^L\mid}{2},\frac{\mid{\gamma_1 }^U.{\gamma_2 }^U\mid}{2} ])\\
= \bigcup_{\gamma_1\tilde{\in}\mu_1,\gamma_2\tilde{\in}\mu_2 }[\textrm{min}\{\tilde{\gamma_1}^L+\tilde{\gamma_2}^L-\tilde{\gamma_1}^L.\tilde{\gamma_2}^L,\frac{\mid{\gamma_1 }^L.{\gamma_2 }^L\mid}{2} \},\textrm{min}\{\tilde{\gamma_1}^U+\tilde{\gamma_2}^U-\tilde{\gamma_1}^U.\tilde{\gamma_2}^U,\frac{\mid{\gamma_1 }^U.{\gamma_2 }^U\mid}{2} \} ]\\
=\bigcup_{\gamma_1\tilde{\in}\mu_1,\gamma_2\tilde{\in}\mu_2 }[\frac{\mid{\gamma_1 }^L.{\gamma_2 }^L\mid}{2},\frac{\mid{\gamma_1 }^U.{\gamma_2 }^U\mid}{2}]\\
=\tilde{\mu_1} O_4 \tilde{\mu_2}.   $\\

(ii) $(\tilde{\mu_1}\oplus\tilde{\mu_2})\tilde{\cup}(\tilde{\mu_1} O_4 \tilde{\mu_2})\\
=(\bigcup_{\gamma_1\tilde{\in}\mu_1,\gamma_2\tilde{\in}\mu_2 }[\tilde{\gamma_1}^L+\tilde{\gamma_2}^L-\tilde{\gamma_1}^L.\tilde{\gamma_2}^L, \tilde{\gamma_1}^U+\tilde{\gamma_2}^U-\tilde{\gamma_1}^U.\tilde{\gamma_2}^U])\tilde{\cup}(\bigcup_{\gamma_1\tilde{\in}\mu_1,\gamma_2\tilde{\in}\mu_2 }[\frac{\mid{\gamma_1 }^L.{\gamma_2 }^L\mid}{2},\frac{\mid{\gamma_1 }^U.{\gamma_2 }^U\mid}{2} ])\\
= \bigcup_{\gamma_1\tilde{\in}\mu_1,\gamma_2\tilde{\in}\mu_2 }[\textrm{max}\{\tilde{\gamma_1}^L+\tilde{\gamma_2}^L-\tilde{\gamma_1}^L.\tilde{\gamma_2}^L,\frac{\mid{\gamma_1 }^L.{\gamma_2 }^L\mid}{2} \},\textrm{max}\{\tilde{\gamma_1}^U+\tilde{\gamma_2}^U-\tilde{\gamma_1}^U.\tilde{\gamma_2}^U,\frac{\mid{\gamma_1 }^U.{\gamma_2 }^U\mid}{2} \} ]\\
=\bigcup_{\gamma_1\tilde{\in}\mu_1,\gamma_2\tilde{\in}\mu_2 }[\tilde{\gamma_1}^L+\tilde{\gamma_2}^L-\tilde{\gamma_1}^L.\tilde{\gamma_2}^L, \tilde{\gamma_1}^U+\tilde{\gamma_2}^U-\tilde{\gamma_1}^U.\tilde{\gamma_2}^U]\\
=\tilde{\mu_1} \oplus \tilde{\mu_2} .  $\\

(iii) $ (\tilde{\mu_1}\otimes\tilde{\mu_2})\tilde{\cap}(\tilde{\mu_1} O_4 \tilde{\mu_2})\\
=(\bigcup_{\gamma_1\tilde{\in}\mu_1,\gamma_2\tilde{\in}\mu_2 }[\tilde{\gamma_1}^L.\tilde{\gamma_2}^L, \tilde{\gamma_1}^U.\tilde{\gamma_2}^U])\tilde{\cap}(\bigcup_{\gamma_1\tilde{\in}\mu_1,\gamma_2\tilde{\in}\mu_2 }[\frac{\mid{\gamma_1 }^L.{\gamma_2 }^L\mid}{2},\frac{\mid{\gamma_1 }^U.{\gamma_2 }^U\mid}{2} ])\\
= \bigcup_{\gamma_1\tilde{\in}\mu_1,\gamma_2\tilde{\in}\mu_2 }[\textrm{min}\{\tilde{\gamma_1}^L.\tilde{\gamma_2}^L,\frac{\mid{\gamma_1 }^L.{\gamma_2 }^L\mid}{2} \},\textrm{min}\{\tilde{\gamma_1}^U.\tilde{\gamma_2}^U,\frac{\mid{\gamma_1 }^U.{\gamma_2 }^U\mid}{2} \} ]\\
=\bigcup_{\gamma_1\tilde{\in}\mu_1,\gamma_2\tilde{\in}\mu_2 }[\frac{\mid{\gamma_1 }^L.{\gamma_2 }^L\mid}{2},\frac{\mid{\gamma_1 }^U.{\gamma_2 }^U\mid}{2}]\\
=\tilde{\mu_1} O_4 \tilde{\mu_2} .  $\\

(iv) $(\tilde{\mu_1}\otimes\tilde{\mu_2})\tilde{\cup}(\tilde{\mu_1} O_4 \tilde{\mu_2})\\
=(\bigcup_{\gamma_1\tilde{\in}\mu_1,\gamma_2\tilde{\in}\mu_2 }[\tilde{\gamma_1}^L.\tilde{\gamma_2}^L, \tilde{\gamma_1}^U.\tilde{\gamma_2}^U])\tilde{\cup}(\bigcup_{\gamma_1\tilde{\in}\mu_1,\gamma_2\tilde{\in}\mu_2 }[\frac{\mid{\gamma_1 }^L.{\gamma_2 }^L\mid}{2},\frac{\mid{\gamma_1 }^U.{\gamma_2 }^U\mid}{2} ])\\
= \bigcup_{\gamma_1\tilde{\in}\mu_1,\gamma_2\tilde{\in}\mu_2 }[\textrm{max}\{\tilde{\gamma_1}^L.\tilde{\gamma_2}^L,\frac{\mid{\gamma_1 }^L.{\gamma_2 }^L\mid}{2} \},\textrm{max}\{\tilde{\gamma_1}^U.\tilde{\gamma_2}^U,\frac{\mid{\gamma_1 }^U.{\gamma_2 }^U\mid}{2} \} ]\\
=(\bigcup_{\gamma_1\tilde{\in}\mu_1,\gamma_2\tilde{\in}\mu_2 }[\tilde{\gamma_1}^L.\tilde{\gamma_2}^L, \tilde{\gamma_1}^U.\tilde{\gamma_2}^U])\\
=\tilde{\mu_1} \otimes \tilde{\mu_2}.   $	\\

(v) $(\tilde{\mu_1}\tilde{\cup}\tilde{\mu_2})O_4\tilde{\mu_3}\\
= \bigcup_{\gamma_1\tilde{\in}\mu_1,\gamma_2\tilde{\in}\mu_2 }[\textrm{max}\{\tilde{\gamma_1}^L,\tilde{\gamma_2}^L\},\textrm{max}\{\tilde{\gamma_1}^U,\tilde{\gamma_2}^U \} ]O_4 \bigcup_{\gamma_3\tilde{\in}h_3}[\tilde{\gamma_3}^L, {\gamma_3}^U]\\
=\bigcup_{\gamma_1\tilde{\in}\mu_1,\gamma_2\tilde{\in}\mu_2, \gamma_3\tilde{\in}\mu_3 }[\frac{\mid\textrm{max}\{{\gamma_1 }^L.{\gamma_2 }^L\}\ast{\gamma_3 }^L\mid}{2},\frac{\mid\textrm{max}\{{\gamma_1 }^U.{\gamma_2 }^U\}\ast{\gamma_3 }^U\mid}{2}]\\
=\bigcup_{\gamma_1\tilde{\in}\mu_1,\gamma_2\tilde{\in}\mu_2,\gamma_3\tilde{\in}\mu_3 }[\textrm{max}\{\frac{\mid{\gamma_1 }^L\ast{\gamma_2 }^L\mid}{2},\frac{\mid{\gamma_2
	}^L\ast{\gamma_3 }^L\mid}{2}\}, \textrm{max}\{\frac{\mid{\gamma_1 }^U\ast{\gamma_2 }^U\mid}{2},\frac{\mid{\gamma_2
}^U\ast{\gamma_3 }^U\mid}{2}\} ]\\
=(\tilde{\mu_1} O_4 \tilde{\mu_3})\tilde{\cup}(\tilde{\mu_2} O_4 \tilde{\mu_3}). $\\

(vi) $ (\tilde{\mu_1}\tilde{\cap}\tilde{\mu_2})O_4\tilde{\mu_3}\\
=\bigcup_{\gamma_1\tilde{\in}\mu_1,\gamma_2\tilde{\in}\mu_2 }[\textrm{min}\{\tilde{\gamma_1}^L,\tilde{\gamma_2}^L\},\textrm{min}\{\tilde{\gamma_1}^U,\tilde{\gamma_2}^U \} ]O_4 \bigcup_{\gamma_3\tilde{\in}\mu_3}[\tilde{\gamma_3}^L, {\gamma_3}^U]\\
=\bigcup_{\gamma_1\tilde{\in}\mu_1,\gamma_2\tilde{\in}\mu_2, \gamma_3\tilde{\in}\mu_3 }[\frac{\mid\textrm{min}\{{\gamma_1 }^L.{\gamma_2 }^L\}\ast{\gamma_3 }^L\mid}{2},\frac{\mid\textrm{min}\{{\gamma_1 }^U.{\gamma_2 }^U\}\ast{\gamma_3 }^U\mid}{2}]\\
=\bigcup_{\gamma_1\tilde{\in}\mu_1,\gamma_2\tilde{\in}\mu_2,\gamma_3\tilde{\in}\mu_3 }[\textrm{min}\{\frac{\mid{\gamma_1 }^L\ast{\gamma_2 }^L\mid}{2},\frac{\mid{\gamma_2
	}^L\ast{\gamma_3 }^L\mid}{2}\}, \textrm{min}\{\frac{\mid{\gamma_1 }^U\ast{\gamma_2 }^U\mid}{2},\frac{\mid{\gamma_2
}^U\ast{\gamma_3 }^U\mid}{2}\} ]\\
=(\tilde{\mu_1} O_4 \tilde{\mu_3})\tilde{\cap}(\tilde{\mu_2} O_4 \tilde{\mu_3}). $	
\end{proof}	



\begin{thebibliography}{99}
	\bibitem{atanassov1} K.T. Atanassove, Intuitionistic Fuzzy Sets, {\it Springer Physica-Verlag.Heidelberg} (1999).
	\bibitem{atanassov}  K.T. Atanassov, Intuitionistic fuzzy sets, Fuzzy Sets and Systems 	20(1986) 87-96.
	
	\bibitem{babitha}  K.V. Babitha, S.J.Johan,  Hesitant fuzzy soft sets,  J. New Results Sci.		3(2013) 98-107.
\bibitem{broumi} S. Broumi and F. Smarandache, New operations over interval valued intuitionistic hesitant fuzzy set, Math. Stat. 2(2) (2014) 62-71.

\bibitem{cagman}  N. \c{C}a\v{g}man and N. S. Engino\v{g}lu, Soft Set Theory
and Uni-int Decision Making, Euro. J. Opera. Research,  207(2010) 848-855.

\bibitem{chen}  N.Chen, Z.Xu, M.Xia,  Interval-valued hesitant preference relations and their applications to group decision making, Knowledge-Based Systems, 37(2013) 528-540.


\bibitem{Chiao} Kuo-Ping Chiao, Fundamental properties of interval vector
max-norm, Tamsui Oxford Journal of Mathematics, 18(2)(2002), 219-233.

\bibitem{Dwyer51} P.S.Dwyer, Linear Computation, New York, Wiley, 1951.

\bibitem{Dwyer53} P.S.Dwyer, Errors of matrix computation, simultaneous
equations and eigenvalues, National Bureu of Standarts, Applied Mathematics
Series, 29(1953), 49-58.

\bibitem{Fischer} P.S.Fischer, Automatic propagated and round-off error
analysis, paper presented at the 13th National Meeting of the Association of
Computing Machinary, June 1958.

\bibitem{liao} H.C. Liao, Z.S. Xu, Subtraction and divison operations over hesitant fuzzy sets,  J. Intell. Fuzzy Systems 27(1)(2014) 65-72.


\bibitem{majietal1} P. K. Maji, R. Biswas, R. Roy, An application of soft sets in a decision making
problem,  Comput. Math. Appl.  44(2002) 1077-1083.

\bibitem{majietal2} P. K. Maji, R. Biswas, R. Roy, Soft set theory, Comput. Math. Appl. 45(2003) 555-562.

\bibitem{maji}  P. K. Maji, R. Biswas and A. R. Roy, Fuzzy Soft
 Sets,  J. Fuzzy Math. 9(3)(2001) 589-602.

\bibitem{molodstov}  D. A. Molodstov, Soft Set Theory-First Result,  Comput. Math. Appl.
 37(1999) 19-31.

\bibitem{MolodtsovLeonov} D. A. Molodtsov, V. Y. Leonov, D.V. Kovkov,  Soft sets technique and its application,  Nechetkie Sistemy i Myagkie Vychisleniya,  1(1)(2006) 8-39.


\bibitem{Moore} R.E.Moore, Automatic Error Analysis in Digital
Computation, LSMD-48421, Lockheed Missiles and Space Company, 1959.

\bibitem{MooreYang62} R.E.Moore and C.T.Yang, Interval Analysis I,
LMSD-285875, Lockheed Missiles and Space Company, 1962.

\bibitem{MooreYang58} R.E.Moore and C.T.Yang,\ Theory of an interval algebra
and its application to numeric analysis, RAAG Memories II, Gaukutsu Bunken
Fukeyu-kai, Tokyo, 1958.


\bibitem{thakur} G.S. Thakur, R. Thakur and R. Singh, New hesitant fuzzy operators, Fuzzy Inform. Eng. 6(2014) 379-392.

\bibitem{torra}  V. Torra, Hesitant fuzzy sets, Inter. J. Intell. Systems,
25(6)(2010) 529-539.

 \bibitem{torra2} V. Torra and Y. Narukawa, On hesitant fuzzy sets and decision.
{\it In proceeding of the 18th IEEE internatiuonal conference on fuzzy systems, Jeju Island, Republic of Korea}, (2009) 1378-1382.


\bibitem{verma}  R. Verma and B.D.Sharma, New operations over hesitant fuzzy sets,  Fuzzy Inform. Eng. 2(2013) 129-146.


\bibitem{wang} J. Wang, X. Li and X. Chen,  Hesitant fuzzy soft sets with application in multicrteria group decision making problems,  The Scientific World J. 2015(2014) 1-14.


\bibitem{xia} M. Xia, Z. Xu, Hesitant fuzzy information aggregation in decision making, Inter. J. Appr. Reasoning 52(2011) 395-407.

\bibitem{xu91} Z. Xu, Intuitionistic fuzzy aggregation operators,  IEEE Transaction on Fuzzy Systems,  15(6)(2007) 1179-1187.

\bibitem{xu9} Z. Xu, Q.L. Da,  The uncertain OWA operator,  Inter. J. Intell. Systems, 17 (2002) 569-575.


\bibitem{xu} Z. Xu, M. Xia,  Distance and similrity measures for hesitant fuzzy sets,  Inform. Sci. 181 (2011) 2128-2138.


\bibitem{zadeh}  L. A. Zadeh, Fuzzy Sets,  Inform. Control. 8(1965) 338-353.

\bibitem{zhang}  H.Zhang, L.Xiong and W.Ma. On interval-Valued Hesitant Fuzzy Soft Sets, {\it Mathematical Problems in Engineering} 2015.

\end{thebibliography}
\end{document}